\shorttitle{Exit problems for general draw-down times} 
\def\bE{\mathbb{E}}
\def\bP{\mathbb{P}}
\def\non{\nonumber}
\def\D{\displaystyle}
\begin{document}

\title{Exit problems for general draw-down times of spectrally negative L{\'e}vy processes} 

\authorone[Nankai University]{Bo Li} 
\addressone{School of Mathematics and LPMC, Nankai University, China.} 
\authortwo[Concordia University]{Nhat Linh Vu}
\addresstwo{Department of Mathematics and Statistics, Concordia University, Canada.}
\authorthree[Concordia University]{Xiaowen Zhou}
\addressthree{Department of Mathematics and Statistics, Concordia University, Canada.}


\begin{abstract}
For spectrally negative L\'evy processes, we prove several fluctuation results involving a general draw-down time,
which is a downward exit time from a dynamic level that depends on the running maximum of the process.
In particular, we find expressions of the Laplace transforms for the two-sided exit problems
involving the draw-down time. We also find the Laplace transforms for the hitting time and the creeping time over the running-maximum related
draw-down level, respectively,   and obtain an expression for a draw-down  associated potential measure. The results are expressed in terms of scale functions for the spectrally negative L\'evy processes.
\end{abstract}

\keywords{Spectrally negative L\'evy process, draw-down time, exit problem, potential measure, creeping time, hitting time.} 

\ams{60G51}{60E10; 60J35} 

\section{Introduction}

As part of fluctuation theory, exit problems for spectrally negative
L\'evy processes and the associated reflected processes have been
studied extensively over the past ten years. Such
problems often concern the joint distributions of the process when it
first leaves either a finite or a semi-finite interval, or when its
draw-down (from the running maximum) or its draw-up (from the
running minimum) first exceeds a fixed level. These results are
often expressed in terms of the scale functions. We refer to
\cite{Kyprianou2014:book:levy} and references therein for a
collection of such results; also see
\cite{Pistorius2004:passagetime:reflect},
\cite{Avram2004:levyexit:annals}, \cite{Zhou08} and \cite{LLZ15}
for research on the draw-down times and the draw-up times for spectrally
negative L\'evy processes reflected at the running
maximum and the running minimum processes, respectively. We refer to \cite{Zhang14}
for recent work on draw-down(up) times of regular diffusions.

Exit problems involving more general first passage times had been
considered earlier for time homogeneous diffusions. In \cite{Leh77}
an exit problem with exit level depending on the running maximum of
the diffusion was studied and a joint Laplace transform was found for
such a general draw-down time. The general draw-down times find
interesting applications in \cite{AY79} of defining the Az\'ema-Yor
martingales to solve the Skorokhod embedding problem.
The draw-down problem for renewal processes was studied in \cite{Bin2017}.
More recent
work on applications of draw-down times can be found in \cite{CKO12,Bin2017}
and references therein.

The general draw-down times for spectrally negative L\'evy processes
had been studied in \cite{Pis07} with an excursion theory approach
to obtain the Skorohod embedding for spectrally negative L\'evy
process and for the associated reflected process from its maximum.
Following a similar approach,
\cite{MIJATOVIC20123812} further considered a sextuple law related to the draw-down time.
In \cite{ALZ17} we considered a perturbed spectrally negative L\'evy risk process, the so called L\'evy tax process, with a
draw-down exit level that is a linear function of the running maximum,
and found expressions on the present values of amount of tax
for this process. To this end,
we applied both the excursion theory and an approximation approach using solutions to the exit
problems with fixed boundaries.

In this paper we continue to investigate the exit problems for a spectrally negative L\'evy process with
a dynamic draw-down exit level that depends on the running maximum in a general way.
Applying the excursion theory  which comes in handy in analyzing the the draw-down fluctuation behaviors for the spectrally negative L\'evy processes,  we first find the expressions for a joint Laplace transform of the process at the draw-down time. We also find the Laplace transforms for the hitting time and the creeping time of the draw-down level, respectively. In addition, we obtain an expression for a potential measure associated to the draw-down time.

This paper is structured as follows. After the introduction, in Section 2 we review basic facts on spectrally negative L\'evy processes to prepare for the main proofs. The main results are presented in Section 3. Section 4 focuses on results with linear draw-down functions where the results are more apparent and the previously known results can be easily recovered. Proofs of the main results are deferred to Section 5.

\section{Spectrally negative L\'evy processes}\label{sec:2}

Throughout this paper, let $X=\{X_{t}, t\geq 0\}$ be a spectrally negative
L\'{e}vy process (SNLP for short), i.e. a stochastic process with stationary
independent increments and with no positive jumps, defined on a
filtered probability space $(\Omega, \mathscr{F}, (\mathscr{F}_{t})_{t\geq 0}
,\mathbb{P})$. We also assume that $X$ is not the negative of
a subordinator. Denote by $\mathbb{P}_{x}$ the probability law of
$X$ given $X_{0}=x$, and the corresponding expectation by
$\mathbb{E}_{x}$. Write $\mathbb{P}$ and $\mathbb{E}$ when $x = 0$.
Its Laplace transform always exists with the Laplace exponent given
by
 $$ \psi (\lambda):= \frac{1}{t}\log\mathbb{E}\big(e^{\lambda X_{t}}\big)\quad \text{for}\quad \lambda\geq 0$$
 where
 $$\psi (\lambda)=\mu \lambda +\frac{1}{2} \sigma^{2} \lambda^{2} + \int_{(-\infty,0)}(e^{\lambda x}-1-\lambda x\mathbf{1}_{\{x>-1\}})\Pi ( dx)$$
for $\mu\in\mathbb{R},\sigma \geq 0$ and the $\sigma$-finite L\'{e}vy measure $\Pi$ on $(-\infty,0)$ satisfying $\int_{(-\infty,0)}(1\wedge x^{2})\Pi( dx) < \infty$. Further, there exists a function
$\Phi: [0,\infty)\to [0,\infty)$
defined by
$$
 \Phi (q) :=\sup\{\lambda \geq0 : \psi (\lambda)=q\}\quad\quad\quad \text{for}\quad q \geq 0.
$$

Scale functions play a central role in the fluctuation theory for SNLPs.
For $q\geq0$, the $q$-scale function $W^{(q)}$ of $X$ is defined as
a function continuous and increasing on $[0,\infty)$ and satisfying
\begin{eqnarray}\label{1}
\int_{0}^{\infty}e^{-\lambda y}W^{(q)}(y) dy=\frac{1}{\psi (\lambda)-q}
\quad\text{for $\lambda>\Phi(q)$}.
\end{eqnarray}
For convenience, we extend the domain of $W^{(q)}$ to the whole real line by setting
$W^{(q)}(x)=0$ for all $x<0$.
Given $W^{(q)}$, the second scale function is defined by
$$Z^{(q)}(x):=1+q\int_0^x W^{(q)}(y) dy.$$
Write $ W= W^{(0)}$ and $Z=Z^{(0)}$ whenever $q=0$.

It is well-known that $W^{(q)}(x)$ on $\mathbb{R}^{+}$ is continuous and
strictly increasing. $W^{(q)}(0)=W(0)>0$ if and only if the process $X$
has paths of bounded variation, and if and only if $\sigma=0$ and
$\int_{-1}^{0} |x|\Pi(dx)<\infty$.
The scale function $W^{(q)}$ is continuously differentiable on $(0,\infty)$
if the process $X$ has paths of unbounded variation
(and in particular, if it has a nontrivial Gaussian component)
or if the process $X$ has paths of bounded variation
and the L{\'e}vy measure has no atoms. Moreover, if
$\sigma>0$, $W^{(q)}$ has continuous derivative of order two on
$(0,\infty)$ and $W^{\prime}(0+)={2}/{\sigma^{2}}$. We refer the
readers to \cite{Chan2011:scalefunction:smooth} for more detailed
discussions on the smoothness of scale functions.

For $c\geq0$, process $\{e^{cX_{t}-\psi(c)t},  t\geq0\}$ is a martingale under $\bP$.
Introduce a new probability measure satisfying
$$
\frac{d\bP^{(c)}}{d\bP}\Big|_{\mathscr{F}_{t}}
= e^{c X_{t}-\psi(c)t}\quad\text{for every $t\geq0$}.
$$
It is well known that $X$ is still a SNLP under $\bP^{(c)}$.
Denoting the associated Laplace exponent and scale functions with a subscript $c$ under $\bP^{(c)}$, a straightforward calculation shows that, for $c\geq0, q+\psi(c)\geq0$,
$$
\psi_{c}(s)=\psi(c+s)-\psi(c)
\quad\text{and}\quad
\Phi_{c}(s)=\Phi(s+\psi(c))-c
\quad \text{for}\quad s\geq0;
$$
in addition,
$$
W^{(q)}_{c}(x)=e^{-cx} W^{(q+\psi(c))}(x)
\quad\text{and}\quad
Z^{(q)}_{c}(x)=1+ q \int_{0}^{x}W^{(q)}_{c}(y) dy.
$$
Notice that, for $x\geq0$, $W^{(q)}(x)$ and $Z^{(q)}(x)$ are analytically extendable to all $q\in\mathbb{C}$, \cite[Lem 8.3, Cor. 8.5]{Kyprianou2014:book:levy}, and the identities \eqref{idenB} in Lemma \ref{prop:levy} hold for all $u,v\geq0$.

For any $c,b\in\mathbb{R}$, defining the first passage times
$$\tau_{b}^{+}:=\inf\{t\geq 0: X_t>b\}
\quad\text{and}\quad
\tau_{c}^{-}:=\inf\{t\geq 0: X_t<c\} $$
with the convention that $\inf\emptyset=\infty$, we have the following result.

\begin{lem}\label{prop:levy}
For $c\leq x\leq b$ and $q,u,v\geq0$, we have
\begin{align}
\bE_{x}\big(e^{-q \tau_{b}^{+}}; \tau_{b}^{+}<\tau_{c}^{-}\big)
=&\ \frac{W^{(q)}(x-c)}{W^{(q)}(b-c)},\label{idenA}\\
\bE_{x}\big(e^{-u \tau_{c}^{-}+v X(\tau_{c}^{-})}; \tau_{c}^{-}<\tau_{b}^{+}\big)
=&\ e^{vx}\Big(Z^{(p)}_{v}(x-c)- \frac{W^{(p)}_{v}(x-c)}{W^{(p)}_{v}(b-c)} Z^{(p)}_{v}(b-c)\Big)\label{idenB}
\end{align}
where $p=u-\psi(v)$, and for $x\in(c,b)$
\begin{align}
\bE_{x}\big(e^{-q \tau_{c}^{-}}; X(\tau_{c}^{-})=c, \tau_{c}^{-}<\tau_{b}^{+}\big)
=&\ \frac{\sigma^{2}}{2}\Big(W^{(q)\prime}(x-c)- W^{(q)}(x-c)\frac{W^{(q)\prime}(b-c)}{W^{(q)}(b-c)}\Big).\label{idenC}
\end{align}
In addition, the resolvent of process $X$ killed at first exiting interval $[c,b]$ is specified by
\begin{align*}
&\ \int_{0}^{\infty} e^{-q t}
\bE_{x}\big(f(X_{t}), t<\tau^{+}_{b}\wedge\tau_{c}^{-}\big) dt\\
=&\ \int_{c}^{b} f(y)
\Big(\frac{W^{(q)}(x-c)}{W^{(q)}(b-c)}W^{(q)}(b-y)- W^{(q)}(x-y)\Big) dy.
\end{align*}
\end{lem}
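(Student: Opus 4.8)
The plan is to derive all four identities from two standard devices — the exponential martingale $\{e^{\gamma X_{t}-\psi(\gamma)t}\}_{t\geq0}$ together with Doob's optional stopping theorem, and the Esscher change of measure $\bP^{(\gamma)}$ from Section~\ref{sec:2} — fed into optional‑stopping and strong‑Markov arguments, after reducing to $c=0$ by spatial homogeneity. I also use one analytic input coming from the Laplace transform \eqref{1}: both $\{e^{-q(t\wedge\tau_{0}^{-})}W^{(q)}(X_{t\wedge\tau_{0}^{-}})\}_{t\geq0}$ and $\{e^{-q(t\wedge\tau_{0}^{-})}Z^{(q)}(X_{t\wedge\tau_{0}^{-}})\}_{t\geq0}$ are $\bP_{x}$‑martingales for $x\geq0$ (equivalently $W^{(q)}$ and $Z^{(q)}$ solve $(\mathcal{L}-q)f=0$ on $(0,\infty)$, $\mathcal{L}$ the infinitesimal generator of $X$).

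I would first prove \eqref{idenA} and the $v=0$ case of \eqref{idenB} together. Since $X$ has no positive jumps, up to $\tau_{b}^{+}\wedge\tau_{0}^{-}$ (which is $\bP_{x}$‑a.s.\ finite for $x\in[0,b]$) the path stays in $[0,b]$, so stopping the two martingales above at $\tau_{b}^{+}\wedge\tau_{0}^{-}$ yields bounded processes; using $W^{(q)}(X_{\tau_{0}^{-}})=0$ on $\{\tau_{0}^{-}<\tau_{b}^{+}\}$ (as $X_{\tau_{0}^{-}}<0$ when $X$ has bounded variation, while $W^{(q)}(0)=0$ when $X$ has unbounded variation) and $Z^{(q)}(X_{\tau_{0}^{-}})=1$ there, optional stopping gives
\[
W^{(q)}(x)=W^{(q)}(b)\,\bE_{x}\big(e^{-q\tau_{b}^{+}};\tau_{b}^{+}<\tau_{0}^{-}\big),\qquad Z^{(q)}(x)=\frac{Z^{(q)}(b)}{W^{(q)}(b)}\,W^{(q)}(x)+\bE_{x}\big(e^{-q\tau_{0}^{-}};\tau_{0}^{-}<\tau_{b}^{+}\big),
\]
which are \eqref{idenA} and \eqref{idenB} with $v=0$. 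For general $v\geq0$ I would write $e^{-u\tau_{0}^{-}+vX(\tau_{0}^{-})}=e^{vx}\,\big(e^{v(X(\tau_{0}^{-})-x)-\psi(v)\tau_{0}^{-}}\big)\,e^{-p\tau_{0}^{-}}$ with $p=u-\psi(v)$, recognise the middle factor as $\tfrac{d\bP^{(v)}_{x}}{d\bP_{x}}\big|_{\mathscr{F}_{\tau_{0}^{-}}}$ on the event $\{\tau_{0}^{-}<\tau_{b}^{+}\}$, obtaining
\[
\bE_{x}\big(e^{-u\tau_{0}^{-}+vX(\tau_{0}^{-})};\tau_{0}^{-}<\tau_{b}^{+}\big)=e^{vx}\,\bE^{(v)}_{x}\big(e^{-p\tau_{0}^{-}};\tau_{0}^{-}<\tau_{b}^{+}\big),
\]
and then insert the already proved $v=0$ identity under $\bP^{(v)}$, with $W^{(p)}_{v},Z^{(p)}_{v}$ replacing $W^{(q)},Z^{(q)}$, to reach \eqref{idenB}.

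For \eqref{idenC} I would note that downward creeping occurs only when $\sigma>0$ and combine \eqref{idenA} with the classical one‑sided creeping identity $\bE_{x}(e^{-q\tau_{0}^{-}};X_{\tau_{0}^{-}}=0)=\tfrac{\sigma^{2}}{2}\big(W^{(q)\prime}(x)-\Phi(q)W^{(q)}(x)\big)$ by the strong Markov property at $\tau_{b}^{+}$: since the process restarts at $b$ on $\{\tau_{b}^{+}<\tau_{0}^{-}\}$,
\[
\bE_{x}\big(e^{-q\tau_{0}^{-}};X_{\tau_{0}^{-}}=0\big)=\bE_{x}\big(e^{-q\tau_{0}^{-}};X_{\tau_{0}^{-}}=0,\tau_{0}^{-}<\tau_{b}^{+}\big)+\frac{W^{(q)}(x)}{W^{(q)}(b)}\,\bE_{b}\big(e^{-q\tau_{0}^{-}};X_{\tau_{0}^{-}}=0\big),
\]
and solving for the first term cancels the $\Phi(q)W^{(q)}$ pieces and produces \eqref{idenC}; when $\sigma=0$ both sides vanish. (The one‑sided creeping identity is itself obtained by letting $v\to\infty$ in the one‑sided analogue of \eqref{idenB}, using $W^{(p)}_{v}(y)=e^{-vy}W^{(q)}(y)$, $W^{(q)}(0)=0$ and $W^{(q)\prime}(0+)=2/\sigma^{2}$, the divergent contributions cancelling — the same limit applied to \eqref{idenB} gives \eqref{idenC} directly.) Finally, for the killed resolvent I would start from the $q$‑potential density of $X$, namely $u^{(q)}(z)=\Phi'(q)e^{-\Phi(q)z}-W^{(q)}(-z)$, decompose the unkilled occupation measure at $\tau_{0}^{-}$ — the overshoot Laplace transform $\bE_{x}(e^{-q\tau_{0}^{-}+\Phi(q)X(\tau_{0}^{-})})$ needed there being the $v=\Phi(q)$ (hence $p=0$) case of \eqref{idenB} for the half‑line — to get the resolvent of $X$ killed only below $0$, namely $\int_{0}^{\infty}f(y)\big(e^{-\Phi(q)y}W^{(q)}(x)-W^{(q)}(x-y)\big)\,dy$ for $x\geq0$; decomposing once more at $\tau_{b}^{+}$ and using \eqref{idenA} then cancels the $e^{-\Phi(q)y}$ terms and yields the asserted formula (with $\int_{0}^{b}$ in place of $\int_{0}^{\infty}$ since the integrand vanishes for $y>b$), and a shift by $c$ gives the general statement.

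The optional‑stopping and strong‑Markov bookkeeping is routine; the only content beyond elementary probability, and hence the main obstacle, is the suite of classical analytic inputs that feed the arguments — the martingale property of $W^{(q)}(X)$ and $Z^{(q)}(X)$ (equivalently $(\mathcal{L}-q)W^{(q)}=(\mathcal{L}-q)Z^{(q)}=0$ on $(0,\infty)$), the fact that the downward‑creeping rate is $\sigma^{2}/2$, and the form of the potential density $u^{(q)}$ — together with the slightly delicate tracking of cancellations in the $v\to\infty$ limit used for \eqref{idenC}. All of these are available in \cite{Kyprianou2014:book:levy}, and granted them the four identities follow by the scheme above with only routine scale‑function algebra.
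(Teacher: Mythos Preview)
The paper does not prove this lemma: it is stated as a collection of standard fluctuation identities with a reference to \cite{Kyprianou2014:book:levy}, and no argument is given. Your sketch is correct and essentially reproduces the textbook derivations found in that reference --- the $W^{(q)}$/$Z^{(q)}$ martingales with optional stopping for \eqref{idenA} and the $v=0$ case of \eqref{idenB}, Esscher transform for general $v$, strong-Markov reduction from the one-sided creeping formula for \eqref{idenC}, and successive strong-Markov decompositions of the free potential for the resolvent --- so there is nothing substantive to compare against.
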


Identity \eqref{idenA} can be found in \cite[Thm. 8.1]{Kyprianou2014:book:levy}.
To obtain the joint Laplace in \eqref{idenB}, we apply
 \cite[Thm.8.1]{Kyprianou2014:book:levy} under the new measure $\bP_{x}^{(v)}$.
Identity \eqref{idenC} can be found in  \cite[eqn (2.6)]{MIJATOVIC20123812},
and $\{X(\tau_{c}^{-})=c \}$ is  known as the creeping event,
which happens for a SNLP when the first downward passage over a level occurs by hitting the level with a positive probability. The result shows that a
SNLP creeps downwards if and only if it has a Gaussian component.

Let $\tau^{\{a\}}:=\inf\{t>0, X_{t}=a\}$ be the first hitting time. We could not find the following result in
literature and  provide a proof for the readers' convenience.

\begin{lem}\label{lem_hitting}
For $x,a\in(c,b)$, we have
\begin{equation}
\label{eqn:hitting:classical}
\bE_{x}\big(e^{-q \tau^{\{a\}}}; \tau^{\{a\}}<\tau_{b}^{+}\wedge\tau_{c}^{-}\big)
=\frac{W^{(q)}(x-c)}{W^{(q)}(a-c)}- \frac{W^{(q)}(x-a)}{W^{(q)}(b-a)} \frac{W^{(q)}(b-c)}{W^{(q)}(a-c)}.
\end{equation}
\end{lem}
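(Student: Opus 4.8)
Write $h(x):=\bE_{x}\big(e^{-q\tau^{\{a\}}};\,\tau^{\{a\}}<\tau_{b}^{+}\wedge\tau_{c}^{-}\big)$ and, without loss of generality, take $c\le a\le b$. The plan is to tie $h$ to the one-sided quantity $\bE_{x}(e^{-q\tau_{b}^{+}};\,\tau_{b}^{+}<\tau_{c}^{-})=W^{(q)}(x-c)/W^{(q)}(b-c)$ from \eqref{idenA}, by splitting the event $\{\tau_{b}^{+}<\tau_{c}^{-}\}$ according to whether or not $X$ visits the point $a$ before first exceeding $b$.

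First I would establish a renewal-type identity. The event $\{\tau_{b}^{+}<\tau_{c}^{-}\}$ is the disjoint union of $\{\tau_{b}^{+}<\tau^{\{a\}}\wedge\tau_{c}^{-}\}$ and $\{\tau^{\{a\}}<\tau_{b}^{+}<\tau_{c}^{-}\}$ (on the second one necessarily $\tau^{\{a\}}<\tau_{b}^{+}\wedge\tau_{c}^{-}$, since $X_{\tau^{\{a\}}}=a<b$). On the second event $X$ hits $a$ while still inside $[c,b]$ and strictly below $b$, so the strong Markov property at $\tau^{\{a\}}$ together with \eqref{idenA} (applied for $X$ started at $x$ and at $a$) gives
\begin{equation*}
\frac{W^{(q)}(x-c)}{W^{(q)}(b-c)}
=\bE_{x}\big(e^{-q\tau_{b}^{+}};\,\tau_{b}^{+}<\tau^{\{a\}}\wedge\tau_{c}^{-}\big)
+\frac{W^{(q)}(a-c)}{W^{(q)}(b-c)}\,h(x).
\end{equation*}

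Next I would identify the remaining expectation. Using the absence of positive jumps, I claim that $\bP_{x}$-a.s.
\begin{equation*}
\{\tau_{b}^{+}<\tau^{\{a\}}\wedge\tau_{c}^{-}\}=\{\tau_{b}^{+}<\tau_{a}^{-}\},
\end{equation*}
both sides being empty when $x<a$ (to reach level $b$ from below $a$ the process must pass continuously through $a$, hitting it strictly before $\tau_{b}^{+}$; and $\tau_{a}^{-}=0$ when $X_{0}=x<a$), which is consistent with $W^{(q)}(x-a)=0$ there. Granting this, \eqref{idenA} with lower level $a$ (for $x\ge a$; for $x<a$ both sides vanish) yields $\bE_{x}(e^{-q\tau_{b}^{+}};\tau_{b}^{+}<\tau^{\{a\}}\wedge\tau_{c}^{-})=W^{(q)}(x-a)/W^{(q)}(b-a)$. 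Substituting into the previous display and solving for $h(x)$ gives precisely \eqref{eqn:hitting:classical}.

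The crux of the argument is the a.s. set identity just stated. The inclusion $\subseteq$ is a direct path argument: if $\tau_{a}^{-}\le\tau_{b}^{+}$, then at time $\tau_{a}^{-}$ either $X_{\tau_{a}^{-}}=a$, so $\tau^{\{a\}}\le\tau_{a}^{-}\le\tau_{b}^{+}$; or $X_{\tau_{a}^{-}}<c$, so $\tau_{c}^{-}\le\tau_{b}^{+}$; or $X_{\tau_{a}^{-}}\in[c,a)$, and then $X$ must creep through $a$ before climbing to $b$, so $\tau^{\{a\}}<\tau_{b}^{+}$ — in each case contradicting $\tau_{b}^{+}<\tau^{\{a\}}\wedge\tau_{c}^{-}$. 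The reverse inclusion (for $x\ge a$) is the delicate step: on $\{\tau_{b}^{+}<\tau_{a}^{-}\}$ one has $X_{t}\ge a$ for all $t\le\tau_{b}^{+}$, and one must exclude $X_{t}=a$ for some $t\in(0,\tau_{b}^{+})$; a visit to the fixed level $a$ from above is possible only by downward creeping at $a$ (which requires $\sigma>0$ and then forces $X$ immediately strictly below $a$, i.e. $\tau_{a}^{-}=t$, a contradiction) or by a jump landing exactly on $a$, an event of zero $\bP_{x}$-probability by the compensation formula. This path regularity of spectrally negative L\'evy processes is what drives the proof; the degenerate configurations in which $a$ coincides with an endpoint (so that ties arise in the renewal identity) are checked separately.
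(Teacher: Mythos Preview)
Your proposal is correct and follows essentially the same route as the paper: decompose $\{\tau_{b}^{+}<\tau_{c}^{-}\}$ according to whether $\tau^{\{a\}}$ occurs first, apply the strong Markov property at $\tau^{\{a\}}$, invoke the set identity $\{\tau_{b}^{+}<\tau^{\{a\}}\wedge\tau_{c}^{-}\}=\{\tau_{b}^{+}<\tau_{a}^{-}\}$, and conclude via \eqref{idenA}. The only difference is that the paper simply cites the set identity $\{\tau_{b}^{+}<\tau^{\{a\}}\}=\{\tau_{b}^{+}<\tau_{a}^{-}\}$ as an observation from \cite{Ivanovs2012:occupationdensity:map}, whereas you supply a pathwise justification sketch.
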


\begin{proof}[Proof of Lemma \ref{lem_hitting}]
As observed in \cite[Lem 11]{Ivanovs2012:occupationdensity:map} that
$
\{\tau_{b}^{+}<\tau^{\{a\}}\}
=\{\tau_{b}^{+}<\tau_{a}^{-}\}
$
for $a<b$,
applying the strong Markov property of $X$ at $\tau^{\{a\}}$,
we have
\begin{align*}
&\ \bE_{x}\big(e^{-q\tau_{b}^{+}}; \tau_{b}^{+}<\tau_{c}^{-}\big)\\
=&\ \bE_{x}\big(e^{-q\tau_{b}^{+}}; \tau^{\{a\}}<\tau_{b}^{+}<\tau_{c}^{-}\big)
+\bE_{x}\big(e^{-q\tau_{b}^{+}}; \tau_{b}^{+}<\tau_{c}^{-}\wedge\tau^{\{a\}}\big)\\
=&\ \bE_{x}\big(e^{-q\tau^{\{a\}}}; \tau^{\{a\}}<\tau_{b}^{+}\wedge\tau_{c}^{-}\big)\cdot
\bE_{a}\big(e^{-q\tau_{b}^{+}}; \tau_{b}^{+}<\tau_{c}^{-}\big)
+ \bE_{x}\big(e^{-q \tau_{b}^{+}}; \tau_{b}^{+}<\tau_{a}^{-}\big).
\end{align*}
The  Laplace transform \eqref{eqn:hitting:classical} for the hitting time follows by applying \eqref{idenA}.
\end{proof}

\section{Main results }\label{main}

Write $\bar{X}_t:=\sup_{0\leq s\leq t}X_s $ for the running maximum
process for $X$.
The process of $X$ reflected at its running maximum is defined by
$Y_{t}:=\bar{X}_{t}-X_{t}$.
Let $\xi(\cdot)$ be a measurable function on $\mathbb{R}$.
Define the draw-down time for $X$ with respect to the draw-down function
$\xi$ as
$$
\tau_{\xi}:=\inf\{t> 0, X_t<\xi(\bar{X_t})\}
=\inf\{t>0, Y_{t}>\bar{\xi}(\bar{X}_{t})\}
$$
where $\bar{\xi}(z):=z-\xi(z)$ and $\{\xi(\bar{X}_{t}), t\geq 0 \}$ is the associated  draw-down level process.
For the case of constant function $\bar{\xi}$,
a sextuple law
was found in \cite{MIJATOVIC20123812}, where the concerned quantities include
the time of reaching the last maximum and the minimum value of $X$ before $\tau_{\xi}$ together with the undershoot at $\tau_{\xi}$.
The process $Y$ is referred to as the draw-down process in \cite{MIJATOVIC20123812}.
In this paper, we focus on an arbitrary measurable and strictly positive function $\bar{\xi}$ on $\mathbb{R}$
and we assume that $W^{(q)\prime}(x)$ exists on $(0,\infty)$ for simplicity.

In this section we first present expressions for
the solutions to the two-sided exit problems involving $\tau_{\xi}$.


\begin{prop}\label{prop:passagetime} For any $q>0$ and $x<b$, we have
\begin{equation}
\bE_{x}\big(e^{-q \tau_{b}^{+}}; \tau_{b}^{+}<\tau_{\xi}\big)
= \exp\Big(-\int_{x}^{b} \frac{W^{(q)\prime}(\bar{\xi}(y))}{W^{(q)}(\bar{\xi}(y))} dy\Big).
\label{eqn:b<xi}
\end{equation}
For any $u, v>0$, $k\in\mathbb{R}$ and $x<b$, with $p:=u-\psi(v)$ we have
\begin{equation}\label{eqn:xi<b}
\begin{aligned}
&\ \bE_{x} \big(e^{-u\tau_{\xi}+v X(\tau_{\xi})+k \bar{X}(\tau_{\xi})}; \tau_{\xi}<\tau_{b}^{+}\big)\\
=&\ e^{vx}
\int_{x}^{b}
e^{ky-\int_{x}^{y}
\frac{W_{v}^{(p)\prime}(\bar{\xi}(z))}{W_{v}^{(p)}(\bar{\xi}(z))} dz}
\Big(\frac{W^{(p)\prime}_{v}(\bar{\xi}(y))}{W^{(p)}_{v}(\bar{\xi}(y))} Z^{(p)}_{v}(\bar{\xi}(y))
- p W^{(p)}_{v}(\bar{\xi}(y))\Big) dy.
\end{aligned}
\end{equation}
\end{prop}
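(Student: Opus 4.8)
The plan is to reduce both identities to the fixed-boundary exit formulas of Lemma~\ref{prop:levy} by discretizing the running maximum and using the strong Markov property at successive first passage times upward, then passing to a limit that produces the product-integral (exponential of an integral) appearing in \eqref{eqn:b<xi} and \eqref{eqn:xi<b}. Concretely, for \eqref{eqn:b<xi} I would partition $[x,b]$ by $x=t_0<t_1<\dots<t_n=b$ and observe that on the event $\{\tau_b^+<\tau_\xi\}$ the process must cross each level $t_k$ before the draw-down occurs; between the passage times $\tau_{t_{k-1}}^+$ and $\tau_{t_k}^+$ the running maximum stays in $[t_{k-1},t_k)$, so the draw-down constraint $X_s\geq\xi(\bar X_s)$ is, up to an error controlled by the oscillation of $\bar\xi$ on $[t_{k-1},t_k]$, the fixed-boundary constraint $X_s\geq t_{k-1}-\bar\xi(t_{k-1})$ (or a comparable constant). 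Applying \eqref{idenA} on each subinterval and the strong Markov property at $\tau_{t_{k-1}}^+$ gives a telescoping product $\prod_k W^{(q)}(\bar\xi(t_{k-1}^*))/W^{(q)}(\bar\xi(t_{k-1}^*)+(t_k-t_{k-1}))$ for suitable sample points $t_{k-1}^*$; taking logs, each factor is $-\big(t_k-t_{k-1}\big)\,W^{(q)\prime}(\bar\xi(t_{k-1}^*))/W^{(q)}(\bar\xi(t_{k-1}^*))+o(t_k-t_{k-1})$, and letting the mesh go to zero yields the Riemann integral in the exponent. This requires continuity of $W^{(q)}$ and $W^{(q)\prime}$ on $(0,\infty)$ (available when $X$ has unbounded variation, and otherwise handled by the smoothness remarks in Section~\ref{sec:2}); the one-sided/upper-lower bounds from monotone approximations of $\xi$ by step functions handle the general measurable case by a sandwiching argument.

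For \eqref{eqn:xi<b} the idea is the same but now I condition on the value $\bar X(\tau_\xi)$ of the running maximum at the draw-down time. Decompose $\{\tau_\xi<\tau_b^+\}=\bigsqcup_{k}\{t_{k-1}\leq \bar X(\tau_\xi)<t_k\}$: on the $k$-th event the process first climbs to level $t_{k-1}$ (contributing a factor $\bE_{t_{j-1}}(e^{-u\tau_{t_j}^+};\tau_{t_j}^+<\tau_{\xi})$ for each $j\leq k-1$, i.e.\ the same product as before but with parameter $u$ and scale functions tilted by $v$ after an $e^{vX}$ change of measure), and then, starting from level around $t_{k-1}$ with running maximum in $[t_{k-1},t_k)$, it undershoots the draw-down level before reaching $t_k$. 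That last piece is governed by \eqref{idenB}: the "survive-and-cross" factor is $W^{(p)}_v(\bar\xi(\cdot))/W^{(p)}_v(\bar\xi(\cdot)+\Delta)$ and the "exit downward with the prescribed exponential weight" factor is $Z^{(p)}_v(\bar\xi(\cdot))-\frac{W^{(p)}_v(\bar\xi(\cdot))}{W^{(p)}_v(\bar\xi(\cdot)+\Delta)}Z^{(p)}_v(\bar\xi(\cdot)+\Delta)$; the exponential change of measure $d\bP^{(v)}/d\bP=e^{vX_t-\psi(v)t}$ turns $e^{-u\tau_\xi+vX(\tau_\xi)}$ into $e^{vx}e^{-p\tau_\xi}$ under $\bP^{(v)}$, with $p=u-\psi(v)$, which is why the tilted scale functions $W^{(p)}_v,Z^{(p)}_v$ appear. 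Assembling the $k$-th term as (product over $j<k$)$\times$(local downward-exit term), expanding the local term to first order in $\Delta=t_k-t_{k-1}$ — here $\frac{Z-(W/W_\Delta)Z_\Delta}{\Delta}\to \frac{W'}{W}Z-pW$ by differentiating $Z^{(p)\prime}_v=pW^{(p)}_v$ and $W^{(p)\prime}_v$ — and summing the $e^{rt_{k-1}}$-weighted contributions gives exactly the Riemann sum for the integral in \eqref{eqn:xi<b}.

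The main obstacle is the rigorous limiting argument: one must show the discretization errors (from replacing the moving boundary $\xi(\bar X_s)$ by a step function and from the strong Markov patching) vanish uniformly as the mesh shrinks. For merely measurable $\xi$ with $\bar\xi>0$, the clean approach is to first prove the formulas for continuous (indeed piecewise-linear or step) $\xi$ — where the sandwiching by step functions and the uniform continuity of $W^{(q)},W^{(q)\prime}$ on compact subintervals of $(0,\infty)$ make the Riemann-sum convergence transparent — and then extend to general $\xi$ by monotone approximation, using that both sides of \eqref{eqn:b<xi}–\eqref{eqn:xi<b} are monotone/continuous in $\xi$ in the appropriate sense. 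An alternative, and perhaps cleaner, route to the same formulas is via excursion theory for $Y=\bar X-X$ away from $0$: the local time at $0$ is (up to normalization) $\bar X$, the draw-down time $\tau_\xi$ is the first time an excursion of $Y$ exceeds height $\bar\xi(\bar X)$, and the exponential formula for Poisson point processes of excursions produces the $\exp(-\int \dots)$ survival factor directly, with the excursion measure of $\{$height$>h\}$ and of the overshoot distribution expressed through $W^{(q)\prime}/W^{(q)}$ and the quantities in \eqref{idenB}; I would expect the paper to take this excursion-theoretic route, and I would structure the proof to first record the needed excursion-measure identities (height law and killed overshoot law) as lemmas, then integrate.
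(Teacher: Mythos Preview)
Your primary route---discretization at levels $x=t_0<\dots<t_n=b$, strong Markov at the successive $\tau_{t_k}^+$, and Riemann-sum passage to the limit---is a legitimate proof strategy (indeed the paper's introduction cites this approximation method from \cite{ALZ17}), but it is \emph{not} the route the paper takes. The paper uses exactly the excursion-theoretic approach you describe at the end: $\bar X$ is the local time of $Y=\bar X-X$ at $0$, the event $\{\tau_b^+<\tau_\xi\}$ is $\{\bar\epsilon_t\le\bar\xi(t)\ \forall t\in[0,b]\}$, and the exponential/compensation formula for the excursion Poisson point process produces the $\exp(-\int)$ factor directly. The one idea you did not quite anticipate is how the needed excursion-measure quantities are \emph{identified}: rather than proving them from scratch, the paper bootstraps by specializing the excursion representation to the constant case $\xi\equiv c$, matching with the known identities \eqref{idenA}--\eqref{idenB}, and differentiating in $b$ to read off
\[
q\nu+n\big(1-e^{-q\zeta}\mathbf 1_{\{\bar\varepsilon\le z\}}\big)=\frac{W^{(q)\prime}(z)}{W^{(q)}(z)},\qquad
n\big(e^{-q\rho(z)};\bar\varepsilon>z\big)=\frac{W^{(q)\prime}(z)}{W^{(q)}(z)}Z^{(q)}(z)-qW^{(q)}(z).
\]
These are then plugged back into the general-$\xi$ excursion decomposition. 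For \eqref{eqn:xi<b} the change of measure to $\bP^{(v)}$ is applied first (as you said), and spatial homogeneity carries the $x=0$ case to general $x$.

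What each approach buys: your discretization avoids excursion theory entirely and works with only Lemma~\ref{prop:levy}, at the cost of a somewhat delicate sandwiching/limit argument (controlling the error from freezing $\bar\xi(\bar X_s)$ on each mesh interval, and---for \eqref{eqn:xi<b}---justifying the first-order expansion uniformly). The paper's excursion route is cleaner once the machinery is in place and makes the product-integral structure manifest; the bootstrapping trick of differentiating the constant-$\xi$ case is what makes it short.
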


\begin{rem}
The assumption $\bar{\xi}>0$ on $[x,b]$ is necessary.
In fact, if $\bar{\xi}(a)=0$ for some $a\in(x,b)$,
one can find that $\tau_{\xi}\leq \tau_{a}^{+}$  $\bP_{x}$-a.s. by definition,
and \eqref{eqn:b<xi} and \eqref{eqn:xi<b} fail to hold.
It is often the case that the process $X$ is bounded from above by a constant $b$ for the event of interest. When this happens,   under $\bP_{x}$
the effective domain of $\bar{\xi}$ is $[x,b]$ instead of $\mathbb{R}$, i.e. only  the values of $\bar{\xi}(y)$ for $y\in [x, b]$ really matter, and the conditions on $\bar{\xi}(y)$  only need to be imposed for $y\in [x, b]$.
\end{rem}

For the potential measure up to time $\tau_{b}^{+}\wedge\tau_{\xi}$, we also have the next result.

\begin{prop}\label{prop:resolvent} For any $x<b$, we have
\begin{align}
&\ \int_{0}^{\infty} e^{-qt}
\bP_{x}\big(X_{t}\in dy; t<\tau_{b}^{+}\wedge\tau_{\xi}\big) dt
\non\\
=&\ \Big( \int_{x}^{b}
e^{-\int_{x}^{z} \frac{W^{(q)\prime}(\bar{\xi}(s))}{W^{(q)}(\bar{\xi}(s))} ds}
\Big(W^{(q)\prime}(z-y)- \frac{W^{(q)\prime}(\bar{\xi}(z))}{W^{(q)}(\bar{\xi}(z))} W^{(q)}(z-y)\Big) \mathbf{1}_{\{y\in(\xi(z), z)\}} dz\Big) dy\non\\
&\quad +W(0) \Big(
e^{-\int_{x}^{y} \frac{W^{(q)\prime}(\bar{\xi}(z))}{W^{(q)}(\bar{\xi}(z))} dz}
\mathbf{1}_{\{y\in(x,b)\}}\Big) dy.\label{eqn:resolvent}
\end{align}
\end{prop}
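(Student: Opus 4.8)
The plan is to run an excursion decomposition of the discounted occupation measure with respect to the running maximum, using Proposition~\ref{prop:passagetime} to supply the ``survival'' factors and carrying out one new excursion-measure computation for the local contribution of each excursion. It suffices to evaluate $\bE_{x}\big(\int_{0}^{\tau_{b}^{+}\wedge\tau_{\xi}}e^{-qt}f(X_{t})\,dt\big)$ for an arbitrary bounded measurable $f\geq0$ and to identify the resulting density in $dy$. I would split this into $A+B$, where $A$ collects the time $X$ spends at its running maximum (on $\{X_{t}=\bar X_{t}\}$) and $B$ the time spent strictly below it (on $\{X_{t}<\bar X_{t}\}$). Throughout, set $v(t):=\bE_{x}\big(e^{-q\tau_{t}^{+}};\tau_{t}^{+}<\tau_{\xi}\big)$, which by \eqref{eqn:b<xi} (with $b$ replaced by $t$) equals $\exp\big(-\int_{x}^{t}W^{(q)\prime}(\bar\xi(s))/W^{(q)}(\bar\xi(s))\,ds\big)$; note that for $t<b$ one automatically has $\tau_{t}^{+}<\tau_{b}^{+}$ on $\{\tau_{t}^{+}<\infty\}$ because $\bar X$ is continuous.

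For $A$, I would use that $\bar X$ is a local time at $0$ for the reflected process $Y=\bar X-X$ and that the inverse local time $\ell\mapsto\tau_{\ell}^{+}$ is a subordinator with drift coefficient $W(0)$, so that $\int_{0}^{\tau_{\ell}^{+}}\mathbf 1_{\{X_{s}=\bar X_{s}\}}\,ds=W(0)\,\ell$; hence the occupation measure of $X$ on $\{X_{t}=\bar X_{t}\}$, pushed forward by $t\mapsto\bar X_{t}$, has density $W(0)$ in the level variable. A change of variables together with Proposition~\ref{prop:passagetime} then give $A=W(0)\int_{x}^{b}f(y)\,v(y)\,dy$, which is exactly the $W(0)$-line of \eqref{eqn:resolvent}.

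For $B$, I would index the excursions of $Y$ away from $0$ by the level $\ell=\bar X\in[x,b)$ at which they occur. The excursion $\epsilon_{\ell}$ at level $\ell$ enters only on $\{\tau_{\ell}^{+}<\tau_{\xi}\}$, carrying the discount $e^{-q\tau_{\ell}^{+}}$, and only up to the time $T_{a}\wedge\zeta$, where $\zeta$ is its lifetime, $a=\bar\xi(\ell)$ and $T_{a}=\inf\{s:\epsilon_{\ell}(s)>a\}$ is the draw-down time inside that excursion (during $\epsilon_{\ell}$ one has $X=\ell-\epsilon_{\ell}$). Since $\mathbf 1_{\{\tau_{\ell}^{+}<\tau_{\xi}\wedge\tau_{b}^{+}\}}e^{-q\tau_{\ell}^{+}}$ is $\mathscr{F}_{\tau_{\ell}^{+}}$-measurable, the exponential (compensation) formula for the point process of excursions together with Proposition~\ref{prop:passagetime} yields
\[
B=\int_{x}^{b}v(\ell)\,\mathcal R_{\bar\xi(\ell)}\big[f(\ell-\cdot)\big]\,d\ell,\qquad \mathcal R_{a}[g]:=n\Big(\int_{0}^{T_{a}\wedge\zeta}e^{-qs}g(\epsilon(s))\,ds\Big),
\]
with $n$ the excursion measure of $Y$ normalised so that $\bar X$ is the local time.

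The remaining step, and the main obstacle, is the excursion occupation identity $\mathcal R_{a}(dw)=\big(W^{(q)\prime}(w)-\frac{W^{(q)\prime}(a)}{W^{(q)}(a)}W^{(q)}(w)\big)\,dw$ on $0<w<a$. I would establish it by expressing $\mathcal R_{a}[g]$ as a normalised limit, as $\varepsilon\downarrow0$, of $W^{(q)}(\varepsilon)^{-1}\bE_{-\varepsilon}\big(\int_{0}^{\tau_{0}^{+}\wedge\tau_{-a}^{-}}e^{-qs}g(-X_{s})\,ds\big)$ and then invoking the killed resolvent in Lemma~\ref{prop:levy}, whereupon the scale-function terms collapse to the displayed density (alternatively one can quote this from the literature on resolvents of reflected spectrally negative L\'evy processes). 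Substituting $w=t-y$, so that $\epsilon_{\ell}(s)=t-y$ corresponds to $X_{s}=y$ with $\xi(t)<y<t$, applying Fubini and adding $A$ then gives \eqref{eqn:resolvent}. The delicate points are the normalisation of $n$ and the regularity/measurability bookkeeping in the compensation formula (in particular, $\tau_{\xi}$ always falls inside an excursion, and one must treat the creeping contribution separately when $W(0)\neq0$), together with the limiting argument for $\mathcal R_{a}$; everything else is routine. One may also bypass part of the excursion machinery by first proving \eqref{eqn:resolvent} for piecewise-constant $\xi$, iterating the killed resolvent of Lemma~\ref{prop:levy} and the identity \eqref{eqn:b<xi}, and then passing to the limit.
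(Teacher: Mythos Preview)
Your proposal is essentially the paper's own proof: the same splitting into time at the maximum versus time in excursions, the same use of the compensation formula together with \eqref{eqn:b<xi} for the survival weight, and the same identification of the excursion occupation density. The one noteworthy difference is how you obtain $\mathcal R_{a}(dw)$: you propose a limiting argument $W^{(q)}(\varepsilon)^{-1}\bE_{-\varepsilon}[\cdots]$ as $\varepsilon\downarrow0$ (or a citation), whereas the paper instead specialises the already-derived integral representation of the resolvent to the case $\xi\equiv c$, equates it with the known killed resolvent of Lemma~\ref{prop:levy}, and differentiates in $b$ to read off the excursion density directly (the same ``specialise to constant $\xi$ and differentiate'' trick it uses for \eqref{eqn:lem:exc:1}--\eqref{eqn:lem:exc:4}). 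Their route avoids any limiting justification and keeps the argument self-contained; yours is conceptually standard but needs care with the normalisation of $n$ and the validity of the $\varepsilon\downarrow0$ limit. One small slip: the $W(0)$-term has nothing to do with creeping---it records time on $\{X_t=\bar X_t\}$, which is positive only in the bounded-variation case.
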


\begin{rem}\label{rem:5} The resolvent density in \eqref{eqn:resolvent} consists of two parts, where the second term degenerates if process $X$ has sample paths of unbounded variation.
By further analysis, one can find that
it is contributed by the total amount of time in
\[\mathcal{L}:=\{t>0: X_{t}=\bar{X}_{t}\}\]
until time $\tau_{b}^{+}\wedge\tau_{\xi}$, that is,
$$
\int_{0}^{\infty} e^{-q t} \mathbf{1}_{\{t\in\mathcal{L}\}} \mathbf{1}_{\{t<\tau_{b}^{+}\wedge\tau_{\xi}\}} \mathbf{1}_{\{X_{t}\in dy\}} dt;
$$
and for the case of $W(0)=0$, we have that $\bP_{x}$
almost surely, $\mathcal{L}$ is a Lebesgue null set.
\end{rem}

\begin{rem}\label{rem:2} If  $\xi\equiv c$ for some $c<x$, we have from Proposition \ref{prop:resolvent} that the support of the resolvent is $[c,b]$.
For any $y\in (c,b)$, since $y>\xi(z)=c$ in the following integral, we have
\begin{align*}
&\ \int_{x}^{b}
e^{-\int_{x}^{z} \frac{W^{(q)\prime}(\bar{\xi}(u))}{W^{(q)}(\bar{\xi}(u))} du}
\Big(W^{(q)\prime}(z-y)- \frac{W^{(q)\prime}(\bar{\xi}(z))}{W^{(q)}(\bar{\xi}(z))} W^{(q)}(z-y)\Big) \mathbf{1}_{\{y\in(\xi(z), z)\}} dz\\
=&\ \int_{x\vee y}^{b}
e^{-\int_{x}^{z} \frac{W^{(q)\prime}(u-c)}{W^{(q)}(u-c)} du}
\Big(W^{(q)\prime}(z-y)-
\frac{W^{(q)\prime}(z-c)}{W^{(q)}(z-c)} W^{(q)}(z-y)
\Big) dz\\
=& \Big(
e^{-\int_{x}^{z} \frac{W^{(q)\prime}(u-c)}{W^{(q)}(u-c)} du}
W^{(q)}(z-y)\Big)\Big|_{x\vee y}^{b}
=\ W^{(q)}(x-c) \Big(\frac{W^{(q)}(z-y)}{W^{(q)}(z-c)}\Big)\Big|^{b}_{x\vee y}\\
=&\ \frac{W^{(q)}(x-c)}{W^{(q)}(b-c)}W^{(q)}(b-y)
- W^{(q)}(x-y) \mathbf{1}_{\{x\geq y\}}
- W(0) \frac{W^{(q)}(x-c)}{W^{(q)}(y-c)} \mathbf{1}_{\{y>x\}}
\end{align*}
and
\begin{align*}
W(0) e^{-\int_{x}^{y} \frac{W^{(q)\prime}(\bar{\xi}(z))}{W^{(q)}(\bar{\xi}(z))} dz}\mathbf{1}_{\{y>x\}}
&\ = W(0) e^{-\int_{x}^{y} \frac{W^{(q)\prime}(z-c)}{W^{(q)}(z-c)} dz}\mathbf{1}_{\{y>x\}}\\
&\ = W(0) \frac{W^{(q)}(x-c)}{W^{(q)}(y-c)} \mathbf{1}_{\{y>x\}}.
\end{align*}
Making use of the fact that $W^{(q)}(z)=0$ for $z<0$ again, we recover the expression of the classical potential density.
\end{rem}

Notice that $\tau_{\xi}\wedge\tau_{c}^{-}=\tau_{\xi\vee c}$ for any $c<x$ and initial value $x$.
Applying Proposition \ref{prop:resolvent}, in the next result we can also obtain a joint distribution involving the running minimum and maximum before $\tau_{\xi}$ together with $X_{\tau_{\xi}-}$ and $X_{\tau_{\xi}}$ when there is an overshoot at the draw-down time $\tau_{\xi}$.

Let $\underline{X}_{t}:=\inf_{s\in[0,t]}X_{s}$ be the running minimum process of $X$.

\begin{cor}\label{cor:1}
For any nonnegative measurable function $f$ on $\mathbb{R}^{2}$ satisfying $f(z,z)=0$ for all $z\in\mathbb{R}$ and  for any $c<x<b$,
we have
\begin{align*}
&\ \bE_{x}\big(e^{-q \tau_{\xi}}
f(X_{\tau_{\xi}-}, X_{\tau_{\xi}});
\underline{X}_{\tau_{\xi}-}>c,
\bar{X}_{\tau_{\xi}}\leq b\big)\\
=&\ W(0) \int_{c}^{b}\Big(e^{-\int_{x}^{z} \frac{W^{(q)\prime}(\overline{\xi\vee c}(s))}{W^{(q)}(\overline{\xi\vee c}(s))} ds}\Big)\,dz \int^{-\bar{\xi}(z)}_{-\infty} f(z,z+u) \Pi(du)\\
&\quad+ \int_{c}^{b}\,dz  \int_{\xi(z)\vee c}^{z}\,dy \int^{\xi(z)-y}_{-\infty} f(y,y+u)\Pi(du)\\
&\quad\quad\times\Big(e^{-\int_{x}^{z} \frac{W^{(q)\prime}(\overline{\xi\vee c}(s))}{W^{(q)}(\overline{\xi\vee c}(s))} ds}
W^{(q)\prime}(z-y)- \frac{W^{(q)\prime}(\overline{\xi\vee c}(z))}{W^{(q)}(\overline{\xi\vee c}(z))} W^{(q)}(z-y)\Big).
\end{align*}
\end{cor}

We now consider the hitting problem of a draw-down level.
Denote by
$$\tau^{\{\xi\}}:=\inf\{t>0, X_{t}=\xi(\bar{X}_{t})\}
=\inf\{t>0, Y_{t}=\bar{\xi}(\bar{X}_{t})\}$$
the first time for $X$ to hit the draw-down level $\xi(\bar{X})$.
A particular interesting case of hitting is the event of creeping,
$\{\tau^{\{\xi\}}=\tau_{\xi}\}$,
which happens for a SNLP when the first downward passage over a level occurs by hitting the level with a positive probability.
It is well known that the classical creeping of a fixed level happens only if
$\sigma>0$, i.e. only if process $X$ has a nontrivial Brownian motion component.
If the downward passage time is replaced with a draw-down time, observe that within the duration of  each downward sample path of excursion away from the running maximum, the draw-down level $\xi({\bar{X}}) $ remains  constant. Therefore, one would expect that the draw-down creeping occurs if and only if process $X$ has a nontrivial Brownian motion component.

In the following proposition, another draw-down level with draw-down function $\theta(z)$ is introduced
with $\bar{\theta}(z)=z-\theta(z)>0$ for all $z\in\mathbb{R}$.

\begin{prop}\label{prop:hitting}
For any $x<b$ and $I:=\{z\in\mathbb{R}: \theta(z)<\xi(z)\}$, we have
\begin{align}
&\ \bE_{x}\Big(e^{-q \tau_{\xi}};
\tau^{\{\xi\}}=\tau_{\xi}<\tau_{b}^{+}\wedge \tau_{\theta}\Big)\non\\
=&\ \frac{\sigma^{2}}{2}
\int_{[x,b]\cap I}
e^{-\int_{x}^{y}\frac{W^{(q)\prime}(\overline{\xi\vee\theta}(z))}{W^{(q)}(\overline{\xi\vee\theta}(z))} dz}
\Big(\frac{(W^{(q)\prime}(\bar{\xi}(y)))^{2}}{W^{(q)}(\bar{\xi}(y))}- W^{(q)\prime\prime}(\bar{\xi}(y))\Big) dy.
\label{eqn:creeping}
\end{align}
Moreover, for any $x<b$ we have
\begin{align}
&\ \bE_{x}\Big(e^{-q \tau^{\{\xi\}}}; \tau^{\{\xi\}}<\tau_{b}^{+}\wedge \tau_{\theta}\Big)\non\\
=&\ \int_{[x,b]\cap I}
e^{-\int_{x}^{y}\frac{W^{(q)\prime}(\overline{\xi\vee\theta}(z))}
{W^{(q)}(\overline{\xi\vee\theta}(z))} dz}
\frac{W^{(q)}(\bar{\theta}(y))}{W^{(q)}(\bar{\theta}(y)-\bar{\xi}(y))}
\Big(\frac{W^{(q)\prime}(\bar{\xi}(y))}{W^{(q)}(\bar{\xi}(y))}-
\frac{W^{(q)\prime}(\bar{\theta}(y))}{W^{(q)}(\bar{\theta}(y))}\Big) dy.
\label{eqn:hitting}
\end{align}
\end{prop}

\begin{rem}
If $\theta<\xi=c$ for some $c<x<b$,
Proposition \ref{prop:hitting}  reduces to the classical result  \eqref{idenC}.
In this case, $I=\mathbb{R}$ and
$\overline{\xi\vee \theta}(y)=y-c=\bar{\xi}(y)$.
Similar to Remark \ref{rem:2}, we have
\[
\int_{x}^{y}\frac{W^{(q)\prime}(\overline{\xi\vee\theta}(z))}{W^{(q)}(\overline{\xi\vee\theta}(z))} dz
=\int_{x}^{y}\frac{W^{(q)\prime}(z-c)}{W^{(q)}(z-c)}\,dz=
\log\Big(\frac{W^{(q)}(y-c)}{W^{(q)}(x-c)}\Big)
\quad\text{for $y\in(x,b)$}.
\]
Then the right hand side of \eqref{eqn:creeping} equals to
\begin{align*}
&\ \frac{\sigma^{2}}{2}\int_{x}^{b} \frac{W^{(q)}(x-c)}{W^{(q)}(y-c)}
\Big(\frac{(W^{(q)\prime}(y-c))^{2}}{W^{(q)}(y-c)}- W^{(q)\prime\prime}(y-c)\Big)\,dy\\
=&\ \frac{\sigma^{2}}{2}
W^{(q)}(x-c)\cdot \Big(-\frac{W^{(q)\prime}(y-c)}{W^{(q)}(y-c)}\Big)\Big|_{x}^{b}\\
=&\ \frac{\sigma^{2}}{2}
\Big(W^{(q)\prime}(x-c)- W^{(q)}(x-c)\frac{W^{(q)\prime}(b-c)}{W^{(q)}(b-c)}\Big),
\end{align*}
which recovers \eqref{idenC}.

Similarly, one can recover Lemma \ref{lem_hitting} from \eqref{eqn:hitting} by taking
$\xi=a, \theta=c$ with $c<a<b$ and $x\in(c,b)$.
\end{rem}

It is also interesting to study similar problems associated to the draw-up times (from the running minimum) with a general draw-up function for a spectrally negative L\'evy process. But it seems challenging to express the desired results in terms of the scale functions.

\section{Applications}
In this section we present two applications of the results from Section \ref{main}.

\subsection*{Selling a stock at a draw-down time}
\begin{ex}\label{exam:1}
The decision to sell a stock is a combination of art and science.
In general, it is ideal  to sell a stock at a price as high as possible
or right before it starts to decline.
However, very few investors can buy at the absolute bottom and sell at the absolute high.
If one does not sell at the right timing, the profit disappears.
There are a number of considerations to decide when  the best time is.
In this example,
we assume that the price process of an underlying security
is given by $S=\{S_{t}=e^{X_{t}},t\geq0\}$.
The investor sells a stock either when it hits a price target in order to lock in gains
or before the ratio $S/\overline{S}$ leaves too far below $1$ to stop the loss, where $\overline{S}$ is the historical high process for  $S$.
Using the Cobb-Douglas function, the investor sells out of a stock when the utility process
\[\{\D \big({S_{t}}/{S_{0}}\big)^{\gamma}\big({S_{t}}/{\overline{S}_{t}}\big)^{1-\gamma}, \,\, t\geq 0  \}  \]
leaves a pre-determined interval $[a,b]$ for some $\gamma,a\in(0,1)$ and $b>1$.
It can be checked directly that $\D T_{a,b}=T^{+}_{a,b}\wedge T^{-}_{a,b}$ where
\[
T^{+}_{a,b}:=\inf\{t>0: S_{t}>b^{\frac{1}{\gamma}} S_{0}\}
\quad\text{and}\quad
T^{-}_{a,b}:=\inf\{t>0: S_{t}/S_{0}< a\cdot(\overline{S}_{t}/S_{0})^{1-\gamma} \}.
\]
Without loss of generality, we take $S_{0}=1$ and then $X_{0}=0$.
Formulated in terms of the first passage time for $X$ under $\bP$, we have
$\D T^{+}_{a,b}=\tau^{+}_{\frac{\log b}{\gamma}}$
and
$\D T^{-}_{a,b}=\tau_{\xi}$ for $\xi(z)=(1-\gamma) z+\log a$.
Let $q>0$ be the risk-free interest rate and $p=q-\psi(1)$. Then we have
 \begin{equation}\label{eqn:exam:1:1}
 \bE_{1}\Big(e^{-q T_{a,b}} S_{T_{a,b}}; T_{a,b}=T^{+}_{a,b}\Big)
 =b^{\frac{1}{\gamma}}\Big(\frac{W^{(q)}(\log 1/a )}{W^{(q)}(\log b/a)}\Big)^{\frac{1}{\gamma}}
 \end{equation}
 and
 \begin{equation}\label{eqn:exam:1:2}
 \begin{aligned}
 \bE_{1}\Big(e^{-q T_{a,b}} S_{T_{a,b}};&\ T_{a,b}=T^{-}_{a,b}\Big)
=Z^{(p)}_{1}(\log 1/a)
- Z^{(p)}_{1}(\log b/a)
\big(\frac{W^{(p)}_{1}(\log 1/a)}{W^{(p)}_{1}(\log b/a)}\big)^{\frac{1}{\gamma}}\\
&\ -p \frac{1-\gamma}{\gamma}
\int_{\log 1/a}^{\log b/a}
\big(W^{(p)}_{1}(\log 1/a)\big)^{\frac{1}{\gamma}}
\big(W^{(p)}_{1}(y)\big)^{\frac{\gamma-1}{\gamma}}\,dy.
 \end{aligned}
 \end{equation}
 \end{ex}
\begin{proof}[Proof of \eqref{eqn:exam:1:1} and \eqref{eqn:exam:1:2}]
In this case, for $z\in[0,\frac{\log b}{\gamma}]$,
$\bar{\xi}(z)=\gamma z- \log a>0$ by definition.
Then $\bar{\xi}'(z)=\gamma$ and we  have for $x\in[0,\frac{\log b}{\gamma}]$,
\begin{equation}\label{eqn:exam:1:3}
 \int_{0}^{x}\frac{W^{(q)\prime}(\bar{\xi}(y))}{W^{(q)}(\bar{\xi}(y))}\,dy
= \frac{1}{\gamma}\Big(\log\big(W^{(q)}(\bar{\xi}(y))\big)\Big)\Big|^{x}_{0}
= \frac{1}{\gamma}\log\Big(\frac{W^{(q)}(\bar{\xi}(x))}{W^{(q)}(\bar{\xi}(0))}\Big).
\end{equation}
Applying the formula \eqref{eqn:b<xi} and
the fact that $S_{T^{+}_{a,b}}=b^{\frac{1}{\gamma}}$ on the set $\{T^{+}_{a,b}<\infty\}$
gives \eqref{eqn:exam:1:1}.

Similarly, we have
\begin{align*}
&\ \int_{0}^{\frac{\log b}{\gamma}}
\exp\Big(-\int_{0}^{y}
\frac{W^{(p)\prime}_{1}(\bar{\xi}(z))}{W^{(p)}_{1}(\bar{\xi}(z))}dz\Big)
\Big(
\frac{W^{(p)\prime}_{1}(\bar{\xi}(y))}{W^{(p)}_{1}(\bar{\xi}(y))}
Z^{(p)}_{1}(\bar{\xi}(y))- p \gamma W^{(p)}_{1}(\bar{\xi}(y))\Big)\,dy\\
=&\
\Big(-Z^{(p)}_{1}(\bar{\xi}(y))
e^{-\int_{0}^{y}
\frac{W^{(p)\prime}_{1}(\bar{\xi}(z))}{W^{(p)}_{1}(\bar{\xi}(z))}dz
}\Big)\Big|_{0}^{\frac{\log b}{\gamma}}
= Z^{(p)}_{1}(\log \frac{1}{a})
- Z^{(p)}_{1}(\log \frac{b}{a})
\big(\frac{W^{(p)}_{1}(\log \frac{1}{a})}{W^{(p)}_{1}(\log \frac{b}{a})}\big)^{\frac{1}{\gamma}}.
\end{align*}
The identity \eqref{eqn:exam:1:2} can be proved by applying \eqref{eqn:xi<b} and a change of variable argument.
\end{proof}

\subsection*{First passage times of the reflected SNLP}
\begin{ex}\label{exam:3}
For function $\xi(z)=z-d$ for some $d>0$, the draw-down time $\tau_{\xi}$ reduces to the first passage time of SNLP reflected at its running maximum which was investigated in \cite{Avram2004:levyexit:annals, Pistorius2004:passagetime:reflect},
that is, $\tau_{\xi}=\kappa^{+}_{d}$ for
$$
\kappa^{+}_{d}:=\inf\{t>0, Y_{t}>d\}.
$$
We have, for $b, q, u, v>0$ and $k\in\mathbb{R}$,
\begin{align}
&\ \bE\big(e^{-q \tau_{b}^{+}}; \tau_{b}^{+}<\kappa^{+}_{d}\big)
= \exp\Big(- \frac{W^{(q)\prime}(d)}{W^{(q)}(d)} b\Big),
\label{eqn:cor2:1}
\end{align}

\begin{align}
&\ \bE\big(e^{-u\kappa^{+}_{d}+v X(\kappa^{+}_{d})+k\bar{X}(\kappa^{+}_{d})}; \kappa^{+}_{d}<\tau_{b}^{+}\big)\non\\
=&\
\Big(\frac{W^{(p)\prime}_{v}(d)}{W^{(p)}_{v}(d)} Z^{(p)}_{v}(d)
- p W^{(p)}_{v}(d)\Big)
\frac{1- e^{(k-\frac{W^{(p)\prime}_{v}(d)}{W^{(p)}_{v}(d)}) b}}
{\frac{W^{(p)\prime}_{v}(d)}{W^{(p)}_{v}(d)}-k},
\label{eqn:cor2:2}
\end{align}
and
\begin{align}
&\ \bE\big(e^{-q \kappa^{+}_{d}}; Y(\kappa^{+}_{d})=d, \kappa^{+}_{d}<\tau_{b}^{+}\big)
\non\\
=&\ \frac{\sigma^{2}}{2}
 \Big(W^{(q)\prime}(d) -
 \frac{W^{(q)}(d)W^{(q)\prime\prime}(d)}{W^{(q)\prime}(d)}\Big)
\big(1- e^{- \frac{W^{(q)\prime}(d)}{W^{(q)}(d)} b}\big),
\label{eqn:cor2:3}
\end{align}
where $p=u-\psi(v)$. In addition, we have for $y\in(-d,b)$
\begin{align}\label{eqn:cor2:4}
&\ \int_{0}^{\infty} e^{-qt}\bP\Big(X_{t}\in dy, t<\kappa^{+}_{d}\wedge\tau_{b}^{+}\Big) dt
\nonumber\\
=&\ \Big(W^{(q)}(d\wedge(b-y)) e^{- \frac{W^{(q)\prime}(d)}{W^{(q)}(d)}(y+d)\wedge b}
- W^{(q)}(-y)\Big) dy.
\end{align}

\end{ex}


Note that  $\bar{\xi}(t)\equiv d$ in this case,
the above results
follow directly from Propositions \ref{prop:passagetime}, \ref{prop:hitting}
and \ref{prop:resolvent}, respectively,
where an argument similar to Remark \ref{rem:2} is applied in obtaining \eqref{eqn:cor2:4}.


\begin{rem}
Denoting by $e_{q}$ an exponential random variable with parameter $q$ and independent of $X$,
it follows from \eqref{eqn:cor2:1} that $\bar{X}_{e_{q}\wedge\kappa^{+}_{d}}$ is exponentially distributed with parameter $\frac{W^{(q)\prime}(d)}{W^{(q)}(d)}$ since
$$\bE\big(e^{-q \tau_{b}^{+}}; \tau_{b}^{+}<\kappa^{+}_{d}\big)
=\bP\big(\tau_{b}^{+}<e_{q}\wedge\kappa^{+}_{d}\big)
=\bP\big(\bar{X}_{e_{q}\wedge\kappa^{+}_{d}}>b\big).$$
Taking $k=-v<0$ in \eqref{eqn:cor2:2} and letting $b\to\infty$, we have
$$
\bE\big(e^{-u\kappa^{+}_{d}- v Y(\kappa^{+}_{d})}; \kappa^{+}_{d}<\infty\big)
=Z^{(p)}_{v}(d)
-W^{(p)}_{v}(d)
\frac{pW^{(p)}_{v}(d)+ v Z^{(p)}_{v}(d)}
{W^{(p)\prime}_{v}(d)+ vW^{(p)}_{v}(d)},
$$
which coincides with \cite[Thm.1]{Avram2004:levyexit:annals}.
\end{rem}

\begin{rem}
For $0<b-y<d$ and $b>0$, identity \eqref{eqn:cor2:4} can  be rewritten as
$$
 \bP\big(X_{e_{q}}\in dy, e_{q}<\kappa^{+}_{d}, \bar{X}_{e_{q}}<b\big)
= q \Big(W^{(q)}(b-y) e^{- \frac{W^{(q)\prime}(d)}{W^{(q)}(d)}b}
- W^{(q)}(-y)\Big) dy.$$
Then
\begin{align*}
& \bP\big(X_{e_{q}}\in dy, e_{q}<\kappa^{+}_{d}, \bar{X}_{e_{q}}\in db\big)\\
=&\ q \Big(W^{(q)\prime}(b-y)- \frac{W^{(q)\prime}(d)}{W^{(q)}(d)} W^{(q)}(b-y)\Big)
e^{- \frac{W^{(q)\prime}(d)}{W^{(q)}(d)}b} dy db.
\end{align*}
Therefore, for $z\in(0,d)$ we have
\begin{align*}
 \bP\Big(Y_{e_{q}}\in dz, e_{q}<\kappa^{+}_{d}\Big)
=&\ q \Big(W^{(q)\prime}(z)- \frac{W^{(q)\prime}(d)}{W^{(q)}(d)}W^{(q)}(z)\Big)
\int_{0}^{\infty} e^{-\frac{W^{(q)\prime}(d)}{W^{(q)}(d)}b} db\\
=&\ q \Big(W^{(q)}(d)\frac{W^{(q)\prime}(z)}{W^{(q)\prime}(d)}- W^{(q)}(z)\Big) dz,
\end{align*}
which coincides with the resolvent given in \cite[Thm.1.(ii)]{Pistorius2004:passagetime:reflect}.

By Remark \ref{rem:5}, for the case $W(0)>0$, we have for $y\in(0,b)$
$$
 \bP(e_{q}<\kappa^{+}_{d}\wedge \tau_{b}^{+}, \bar{X}(e_{q})\in dy,
Y(e_{q})=0)
= q W(0) e^{-\frac{W^{(q)\prime}(d)}{W^{(q)}(d)} y} dy.
$$
Therefore,
$$
\bP(e_{q}<\kappa^{+}_{d}\wedge \tau_{b}^{+}, Y(e_{q})=0)
=\ q \frac{W(0)W^{(q)}(d)}{W^{(q)\prime}(d)}\Big(1- e^{-\frac{W^{(q)\prime}(d)}{W^{(q)}(d)} b}\Big).$$
It follows that
$$\bP(Y(e_{q})=0, e_{q}<\kappa^{+}_{d})
=\ q W(0) \frac{W^{(q)}(d)}{W^{(q)\prime}(d)},
$$
which gives the time $Y$ spent at $0$ before $\kappa^{+}_{d}$ and coincides with \cite[Thm.1.(ii)]{Pistorius2004:passagetime:reflect}.
\end{rem}

\section{Proofs of main results}

This section is dedicated to the proofs for our main results,
where we make use of  the excursion theory for  Markov processes,
and appeal to the compensation formula and the exponential formula for Poisson point processes;
see for example \cite[O.5]{Bertoin96:book}.
To this end, we first restate the formula concerned in terms of excursions, and then apply the compensation formula.
For this we use the following notations from
\cite[IV]{Bertoin96:book}, \cite{Avram2004:levyexit:annals} and \cite{Pis07},
and refer the readers to the book for a detailed discussion on the related excursion theory.
Moreover, by the spatial homogeneity of $X$, we mainly focus on the cases under $\bP$. More
general results for $\bP_{x}$ can be derived by a shifting argument
as shown in the proof of Proposition \ref{prop:passagetime}.

Recall that $\bar{X}$ is the running maximum process of $X$ and let $Y:=\bar{X}-X$ be the reflected process.
It is known that $Y$ is a 'nice' Markov process with
$0$ being instantaneous whenever $W(0)=0$.
Let $\mathscr{L}:=\{t>0, Y_{t}=0\}$ be the zero set of $Y$
and $\overline{\mathscr{L}}$ be its closure.
A local time process $L$ of $Y$ at $0$ is a continuous process
that increases only on $\overline{\mathscr{L}}$ and is unique up to a multiplicative factor.
Thus, there exists $\nu\geq0$ such that
\begin{equation}\label{eqn:2}
\int_{0}^{t} \mathbf{1}_{\{s\in\mathscr{L}\}}\,ds= \nu L(t)
\quad \text{for all } t\geq0;
\end{equation}
see \cite[Cor.IV.6]{Bertoin96:book}.
The right inverse of $L$ is defined by
\[L^{-1}_{t}:=\inf\{s>0: L(s)>t\}, \,\,\, t\geq 0.\]
Under the new time scale,
the excursion process of $Y$ away from zero,
associated to $L$ and denoted by $\epsilon\equiv\{\epsilon_{r}, r\geq0\}$,
takes values in the so-called excursion space of paths away from $0$
with an additional isolated point $\gamma$, $\mathscr{E}\cup\{\gamma\}$,
and is defined by
$$
\epsilon_{r}:=\{Y_{t}, L^{-1}_{r-}\leq t< L^{-1}_{r}\}
\quad\text{if\ } L^{-1}_{r-}<L^{-1}_{r},
$$
and $\epsilon_{r}:=\gamma$ otherwise. The excursion process
$\epsilon$ is a Poisson point process,
possibly stopped at time $L(\infty)$ with an excursion of infinite lifetime,
 characterized by a $\sigma$-finite measure $n(\cdot)$ on $\mathscr{E}$. Set
$\big(\overline{\mathscr{L}}\big)^{c}$ consists of countable excursion intervals,
and $\mathscr{L}$ differs from $\overline{\mathscr{L}}$ by at  most countable points.
In particular,
\[\D \int_{0}^{L^{-1}_{r}} \mathbf{1}_{\{s\in\overline{\mathscr{L}}\}}\,ds= \nu r\]
on $\{L^{-1}_{r}<\infty\}$ under $\bP$; see \cite[Lem.VI.8]{Bertoin96:book}.
For a Borel function $f\geq0$ on $\mathscr{E}\cup\{\gamma\}$ with $f(\gamma)=0$,
we write
\[\D n(f):=\int_{\mathscr{E}} f(\varepsilon) n(d\varepsilon).\]
For any $c\geq0$, $n^{(c)}$ denotes the  associated excursion measure for $X$ under $\bP^{(c)}$.

For a SNLP $X$ under $\bP$, its running maximum process $\bar{X}$ fulfills the condition of a local time
and is  chosen to be the local time of $Y$ at $0$; see \cite[VII]{Bertoin96:book}.
For this choice of $L$,
\[\D L(t)=\sup_{s\in[0,t]}X_{s} \]
and $L^{-1}_{s}=\tau_{s}^{+}$ is a subordinator with Laplace exponent $\Phi$.
Since $\nu$ is the drift parameter of $L^{-1}$ (see \cite[Thm.IV.8]{Bertoin96:book}),
we have
\[\D \nu=\lim_{s\to\infty}\frac{\Phi(s)}{s}=\lim_{s\to\infty}\frac{s}{\psi(s)}=W(0).\]
For an excursion $\varepsilon\in \mathscr{E}$,
its lifetime is denoted by $\zeta$
and its excursion height is denoted by $\overline{\varepsilon}$.
The first passage and the first hitting time of $\varepsilon$ are respectively defined by
$$
\rho_{c}^{+}:=\inf\{s\in(0,\zeta): \varepsilon(s)> c\}
\quad \text {and} \quad
\rho^{\{c\}}:=\inf\{s\in(0,\zeta): \varepsilon(s)=c\},
$$
with the convention that $\inf \emptyset =\infty$.
We write $\rho^{+}_{c}(r)$, $\rho^{\{c\}}(r)$ and $\zeta(r)$
for the first passage time, the first hitting time and the life time, respectively,
of the excursion at local time $r$, that is, $\epsilon_{r}=\varepsilon\in\mathscr{E}$.
As before we denote by $e_{q}$ an exponential variable with parameter $q>0$ and independent of $X$.

\begin{proof}[Proof of Proposition \ref{prop:passagetime}]

 Observe from their definitions that under $\bP$, 
\begin{itemize}
\item
on the set $\{\tau_{b}^{+}<\infty\}$,
$$
\tau_{b}^{+}
=L^{-1}_{b}
=\int_{0}^{L^{-1}_{b}} \mathbf{1}_{\{t\in\overline{\mathscr{L}}\}}\,dt
+\int_{0}^{L^{-1}_{b}} \mathbf{1}_{\{t\notin\overline{\mathscr{L}}\}}\,dt
=\nu b+ \sum_{r\in[0,b]} \zeta(r);
$$
\item
on the set $\{\tau_{\xi}<\infty\}$,
$\tau_{\xi}=L^{-1}_{r-}+\rho^{+}_{\bar{\xi}(r)}(r)$
and
$\bar{X}(\tau_{\xi})=r$
 for $r=L(\tau_{\xi})$;
\item
on the set $\{\tau^{\{\xi\}}<\infty\}$,
$\tau^{\{\xi\}}=L^{-1}_{r-}+\rho^{\{\bar{\xi}(r)\}}(r)$
and
$\bar{X}(\tau_{\xi})=r$ for $r=L(\tau^{\{\xi\}})$.
\end{itemize}

From the idea in the proof of \cite[Thm.VII.8]{Bertoin96:book},
it holds that
$$
\{\tau_{b}^{+}<\tau_{\xi}\}
=
\{\bar{\epsilon}_{r}\leq \bar{\xi}(r)\ \text{for all\ } r\in[0,b]\}.
$$
Therefore, we have from the exponential compensation formula,
see \cite[O.5]{Bertoin96:book}, that
\begin{align}
&\ \bE \big(e^{-q \tau_{b}^{+}}; \tau_{b}^{+}<\tau_{\xi}\big)
= e^{-q \nu b}\times\bE \Big(
e^{-q \sum_{r\in[0,b]}\zeta(r)} \prod_{r\in[0,b]}
\mathbf{1}_{\{\bar{\epsilon}_{r}\leq \bar{\xi}(r)\}}\Big)\non\\
=&\ e^{-q \nu b}\times \bE\Big(\exp\big(- \sum_{r\in[0,b]}
\big(\zeta(r)+ \infty\cdot \mathbf{1}_{\{\bar{\epsilon}_{r}> \bar{\xi}(r)\}}\big)\big)\Big) \non\\
=&\ \exp\Big(-\int_{0}^{b} \Big(q \nu+
\int_{\mathscr{E}}\big(1- e^{-q \zeta} \mathbf{1}_{\{\overline{\varepsilon}\leq\bar{\xi}(r)\}}\big)
n(d\varepsilon)\Big)\,dr\Big),\label{eqn:1}
\end{align}
with the understanding that $e^{-\infty}=0$ and $\infty\times 0=0$.
To solve the problem,
we recall the classical case of constant $\xi\equiv c<0$ in Lemma \ref{prop:levy}.
\begin{align*}
&\
\frac{W^{(q)}(-c)}{W^{(q)}(b-c)}=\bE\big(e^{-q\tau_{b}^{+}}; \tau_{b}^{+}<\tau_{c}^{-}\big)\\
=&\ \exp\Big(-\int_{0}^{b} \Big(q \nu+
\int_{\mathscr{E}}\big(1- e^{-q \zeta} \mathbf{1}_{\{\overline{\varepsilon}\leq (r-c)\}}\big)
n(d\varepsilon)\Big)\,dr\Big).
\end{align*}
Differentiating in $b$ on the both sides of the equation above gives
\begin{equation}
q \nu+
\int_{\mathscr{E}}\big(1- e^{-q \zeta} \mathbf{1}_{\{\overline{\varepsilon}\leq z\}}\big)
 n(d\varepsilon)
= \frac{W^{(q)\prime}(z)}{W^{(q)}(z)}
\quad \text{for $z>0$.}
\label{eqn:lem:exc:1}
\end{equation}
Applying the formula \eqref{eqn:lem:exc:1} to the previous equation \eqref{eqn:1}
gives \eqref{eqn:b<xi} under $\bP$.

For the event $\{\tau_{\xi}<\tau_{b}^{+}\}$ and for $p=u-\psi(v)>0$, taking a change of measure we have
\begin{align*}
&\ \bE\big(e^{-u\tau_{\xi}+v X(\tau_{\xi})+k\bar{X}(\tau_{\xi})}; \,\tau_{\xi}<\tau_{b}^{+}\big)\\
&= \bE^{(v)} \big(e^{-p\tau_{\xi}+k\bar{X}(\tau_{\xi})}; \,\tau_{\xi}<\tau_{b}^{+}\big)\\
&=\ \bE^{(v)}\Big(\sum_{r\in[0,b]}
\Big(e^{kr-p L^{-1}_{r-}}
\prod_{s<r} \mathbf{1}_{\{\overline{\epsilon}_{s}\leq \bar{\xi}(s)\}}\Big)
\times e^{-p\rho^{+}_{\bar{\xi}(r)}(r)}
\mathbf{1}_{\{\bar{\epsilon}_{r}>\bar{\xi}(r)\}}\Big)\\
&=\ \int_{0}^{b}e^{kr}\bE^{(v)} \Big(e^{-p L^{-1}_{r-}} \prod_{s<r}
\mathbf{1}_{\{\overline{\epsilon}_{s}\leq \bar{\xi}(s)\}}\Big)\times
n^{(v)}\big(e^{-p\rho^{+}_{\bar{\xi}(r)}}; \overline{\varepsilon}>\bar{\xi}(r)\big)\,dr\\
&=\ \int_{0}^{b}e^{kr} \bE^{(v)} \Big(e^{-p L^{-1}_{r}}; L^{-1}_{r}<\tau_{\xi}\Big)
\times
n^{(v)}\big(e^{-p \rho^{+}_{\bar{\xi}(r)}}; \overline{\varepsilon}>\bar{\xi}(r)\big)\,dr,
\end{align*}
where a compensation formula is applied for the third equality; see e.g. \cite[O.5]{Bertoin96:book},
and where we used the fact that $L^{-1}_{t}\neq L^{-1}_{t-}$ for at most countably many values of $t$.

Considering now the case where $\xi\equiv c<0$, $u=q>0$ and $v=k=0$, we have
\begin{align*}
&\
 Z^{(q)}(-c)- \frac{W^{(q)}(-c)}{W^{(q)}(b-c)} Z^{(q)}(b-c)\\
&=\ \bE \big(e^{-q\tau_{c}^{-}}; \tau_{c}^{-}<\tau_{b}^{+}\big)\\
&=\ \int_{0} ^{b} \bE \Big(e^{-q \tau_{r}^{+}}; \tau_{r}^{+}<\tau_{c}^{-}\Big)
\times n\big(e^{- q \rho^{+}_{r-c}}; \overline{\varepsilon}>(r-c)\big)\,dr\\
&=\ \int_{0} ^{b} \frac{W^{(q)}(-c)}{W^{(q)}(r-c)}\times
n\big(e^{- q \rho^{+}_{r-c}}; \overline{\varepsilon}>(r-c)\big)\,dr.
\end{align*}
Differentiating in $b$ on the both sides of the above equation gives
\begin{equation}
n\big(e^{-q \rho^{+}_{z}}; \overline{\varepsilon}>z\big)
= \frac{W^{(q)\prime}(z)}{W^{(q)}(z)} Z^{(q)}(z)- q W^{(q)}(z)
\quad\text{for $z>0$.}
\label{eqn:lem:exc:2}
\end{equation}
Plugging \eqref{eqn:b<xi} and \eqref{eqn:lem:exc:2} into the equation gives
the formula \eqref{eqn:xi<b} under $\bP$.
The general result for $u,v>0$ and $k\in\mathbb{R}$ follows by an analytic extension.

 To consider the general case of $X(0)=x$ with $x<b$,
 we introduce a function $\D \varsigma(y):=\xi(y+x)-x$. Then
 \[ \bar{\varsigma}(y)=y+x-\xi(y+x)=\bar{\xi}(y+x).\]
 Since $X$ is spatially homogenous, we have
 \[(X,\bar{X},\tau_{\xi})|_{\bP_{x}}=(x+X, x+\bar{X}, \tau_{\varsigma})|_{\bP}.\]
  Therefore,
\begin{align*}
&\ \bE_{x}\big(e^{-q \tau_{b}^{+}}; \tau_{b}^{+}<\tau_{\xi}\big)
=\bE\big(e^{-q \tau_{b-x}^{+}}; \tau_{b-x}^{+}<\tau_{\varsigma}\big)\\
=&\ \exp\Big(-\int_{0}^{b-x} \frac{W^{(q)\prime}(\bar{\varsigma}(y))}
{W^{(q)}(\bar{\varsigma}(y))} dy\Big)
= \exp\Big(-\int_{x}^{b}
\frac{W^{(q)\prime}(\bar{\xi}(y))}
{W^{(q)}(\bar{\xi}(y))}\,dy\Big).
\end{align*}
which gives \eqref{eqn:b<xi}. Similarly, we have from the spatial homogeneity of $X$ that
\begin{align*}
&\ \bE_{x} \big(e^{-u\tau_{\xi}+v X(\tau_{\xi})+k\bar{X}(\tau_{\xi})}; \tau_{\xi}<\tau_{b}^{+}\big)\\
&= e^{vx+kx}\bE\big(e^{-u\tau_{\varsigma}+v X(\tau_{\varsigma})+k\bar{X}(\tau_{\varsigma})};
\tau_{\varsigma}<\tau_{b-x}^{+}\big) \\
&=\ e^{vx+kx} \int_{0}^{b-x}
e^{ky-\int_{0}^{y}\frac{W_{v}^{(p)\prime}(\bar{\varsigma}(z))}{W_{v}^{(p)}(\bar{\varsigma}(z))} dz}
\Big(\frac{W^{(p)\prime}_{v}(\bar{\varsigma}(y))}{W^{(p)}_{v}(\bar{\varsigma}(y))} Z^{(p)}_{v}(\bar{\varsigma}(y))- p W^{(p)}_{v}(\bar{\varsigma}(y))\Big) dy\\
&=\ e^{vx}\int_{x}^{b}
e^{ky-\int_{x}^{y} \frac{W_{v}^{(p)\prime}(\bar{\xi}(z))}{W_{v}^{(p)}(\bar{\xi}(z))} dz}
\Big(\frac{W^{(p)\prime}_{v}(\bar{\xi}(y))}{W^{(p)}_{v}(\bar{\xi}(y))} Z^{(p)}_{v}(\bar{\xi}(y))
- p W^{(p)}_{v}(\bar{\xi}(y))\Big) dy.
\end{align*}
This concludes the proof of Proposition \ref{prop:passagetime}.
\end{proof}

In the following proofs, we use
an idea from \cite[Lem.VI.8]{Bertoin96:book} 
and the compensation formula,
and we only focus on the case under $\bP$ and for  $b>0$.

\begin{proof}[Proof of Proposition \ref{prop:resolvent}]
Let $f\geq0$ be a bounded and continuous function on $\mathbb{R}$.
For the resolvent of $X$ killed at $\tau_{b}^{+}\wedge\tau_{\xi}$,
which is defined in \eqref{eqn:resolvent}, we have
\begin{align*}
&\ \int_{0}^{\infty} e^{-qt}\bE \big(f(X_{t}); t<\tau_{b}^{+}\wedge\tau_{\xi}\big)\,dt\\
=&\ \int_{0}^{\infty} e^{-qt}\bE \big(f(X_{t}), t\in\overline{\mathscr{L}}, t<\tau_{b}^{+}\wedge\tau_{\xi}\big)\,dt\\
&\quad + \int_{0}^{\infty} e^{-qt}\bE \big(f(X_{t}), t\notin\overline{\mathscr{L}}, t<\tau_{b}^{+}\wedge\tau_{\xi}\big)\,dt\\
=:&I_{1}+I_{2}.
\end{align*}
Recalling equation \eqref{eqn:2},
applying Fubini's Theorem and a change of variable, we have
\begin{align*}
I_{1}=&\ \nu\bE \Big(\int_{0}^{\infty} e^{-q t} f(X_{t}) \mathbf{1}_{\{t<\tau_{b}^{+}\wedge\tau_{\xi}\}}\,dL_{t}\Big)\\
=&\ \nu \bE \Big(\int_{0}^{\infty} e^{-q L^{-1}_{r}}
f(X(L^{-1}_{r})) \mathbf{1}_{\{L^{-1}_{r}<\tau_{b}^{+}\wedge\tau_{\xi}\}}\,dr\Big)\\
=&\ \nu \int_{0}^{b} f(r)
\bE \big( e^{-q L^{-1}_{r}}; L^{-1}_{r}<\tau_{\xi}\big)\,dr,
\end{align*}
where note that $L^{-1}_{r}=\tau_{r}^{+}$ and $X(L^{-1}_{r})=r$ on event $\{L^{-1}_{r}<\infty\}$.
On the other hand, we have
\begin{align*}
&\ q I_{2}= \bE \Big(f(X_{e_{q}}); e_{q}\notin \overline{\mathscr{L}}, e_{q}<\tau_{b}^{+}\wedge\tau_{\xi}\Big)\\
=&\ \bE \Big(\sum_{r\in[0,b]}
f\big(r-\epsilon_{r}(e_{q}-L^{-1}_{r-})\big)\times
\prod_{s<r} \mathbf{1}_{\{\overline{\epsilon}_{s}\leq \bar{\xi}(s)\}}
\mathbf{1}_{\{e_{q}<L^{-1}_{r-}+\rho^{+}_{\bar{\xi}(r)}(r)\}};
L^{-1}_{r-}<e_{q}<L^{-1}_{r}\Big),
\end{align*}
where $\bar{X}_{e_{q}}=L(e_{q})=r$ on the event $\{L^{-1}_{r-}<e_{q}<L^{-1}_{r}\}$.
By the memoryless property of $e_{q}$ and the compensation formula,
we further have
\begin{align*}
&\ q I_{2}=
\bE \Big(
\sum_{r\in[0,b]}e^{-qL^{-1}_{r-}}
\prod_{s<r} \mathbf{1}_{\{\overline{\epsilon}_{s}\leq \bar{\xi}(s)\}}
\times f(r-\epsilon_{r}(e_{q})) \mathbf{1}_{\{e_{q}<\rho^{+}_{\bar{\xi}(r)}(r)\wedge\zeta(r)\}}\Big)\\
&\quad= \int_{0}^{b} \bE \big(e^{-q L^{-1}_{r}}; L^{-1}_{r}<\tau_{\xi}\big)\times
q \int_{0}^{\infty}n\Big(e^{-qs}f(r-\varepsilon(s));
s<\rho^{+}_{\bar{\xi}(r)}\wedge\zeta\Big)\,dsdr.
\end{align*}
Putting these together gives
\begin{align*}
&\ \int_{0}^{\infty} e^{-qt}\bE \big(f(X_{t}); t<\tau_{b}^{+}\wedge\tau_{\xi}\big)\,dt\\
=&\
\int_{0}^{b}
\bE \big(e^{-q \tau_{r}^{+}}; \tau_{r}^{+}<\tau_{\xi}\big)\times
\Big( \nu f(r)+ \int_{0}^{\infty}
e^{-qs} n\big(f(r-\varepsilon(s));
s<\rho^{+}_{\bar{\xi}(r)}\wedge\zeta\big)\,ds\Big)dr.
\end{align*}

For the  case of $\xi\equiv c<0$,
we have
\begin{align*}
&\ \int_{0}^{\infty} e^{-qt}\bE \big(f(X_{t}); t<\tau_{b}^{+}\wedge\tau_{c}^{-}\big)\,dt\\
=&\ \int_{0}^{b} \frac{W^{(q)}(-c)}{W^{(q)}(r-c)}
\Big(\nu f(r)+ \int_{0}^{\infty} e^{-q s} n\big( f(r-\varepsilon(s));
s<\rho^{+}_{r-c}\wedge\zeta\big)\,ds\Big)\,dr\\
=&\ \int_{c}^{b} f(y) \Big(\frac{W^{(q)}(-c)}{W^{(q)}(b-c)} W^{(q)}(b-y)-W^{(q)}(-y)\Big)\,dy.
\end{align*}
Further differentiating in $b$ on the above equation,
we eventually have
\begin{align}
&\ \nu f(b)+ \int_{0}^{\infty} e^{-q s} n\big(f(b-\varepsilon(s)); s<\rho^{+}_{b-c}\wedge\zeta\big)\,ds\non\\
=&\ W(0)f(b)+ \int_{c}^{b}f(y)
\Big(W^{(q)\prime}(b-y)- \frac{W^{(q)\prime}(b-c)}{W^{(q)}(b-c)}W^{(q)}(b-y)\Big)\,dy.
\label{eqn:lem:exc:5}
\end{align}
Since $\nu=W(0)$,  formula \eqref{eqn:resolvent} is thus proved for $x=0$.
\end{proof}

\begin{proof}[Proof of Proposition \ref{prop:hitting}]
Similar to the proofs of Propositions \ref{prop:passagetime} and \ref{prop:resolvent},
for the event of creeping $\{\tau^{\{\xi\}}=\tau_{\xi}<\tau_{b}^{+}\wedge \tau_{\theta}\}$,
i.e. the event that the draw-down event happens before $X$
leaves interval $[\theta(\bar{X}),b]$
by hitting the draw-down level $\xi(\bar{X}) $, we have
\begin{align*}
&\ \bE \big(e^{-q \tau_{\xi}}; \tau^{\{\xi\}}=\tau_{\xi}<\tau_{b}^{+}\wedge \tau_{\theta}\big)\\
=& \bE \Big(\sum_{r\in[0,b]} \big(e^{-q L^{-1}_{r-}}
\prod_{s<r} \mathbf{1}_{\{\overline{\epsilon}_{s}\leq \bar{\theta}(s)\}}
\mathbf{1}_{\{\overline{\epsilon}_{s}\leq \bar{\xi}(s)\}}\big)
\times \big(e^{-q \rho^{+}_{\bar{\xi}(r)}(r)}
\mathbf{1}_{\{\epsilon_{r}(\rho^{+}_{\bar{\xi}(r)})=\bar{\xi}(r)\}}
\mathbf{1}_{\{\bar{\xi}(r)<\bar{\theta}(r)\}}
\big)\Big)\\
=&\ \int_{0}^{b} \bE \big(e^{- q L^{-1}_{r}}; L^{-1}_{r}<\tau_{\xi\vee \theta}\big)
\times n\big(e^{-q \rho^{+}_{\bar{\xi}(r)}}; \varepsilon(\rho^{+}_{\bar{\xi}(r)})=\bar{\xi}(r) \big)
\times \mathbf{1}_{\{r\in I\}}
\,dr.
\end{align*}
For the case of $\xi(z)=\theta(z)+1\equiv c$, we have
$I=\mathbb{R}$ and
$\{\tau_{\xi}\leq \tau_{b}^{+}\wedge \tau_{\theta}\}=\{\tau_{c}^{-}\leq \tau_{b}^{+}\}$
\begin{align*}
&\ \bE \big(e^{-q \tau_{c}^{-}}; X(\tau_{c}^{-})=c, \tau_{c}^{-}<\tau_{b}^{+}\big)\\
&= \frac{\sigma^{2}}{2} \Big(W^{(q)\prime}(-c)- W^{(q)}(-c) \frac{W^{(q)\prime}(b-c)}{W^{(q)}(b-c)}\Big)\\
&= \int_{0}^{b} \frac{W^{(q)}(-c)}{W^{(q)}(r-c)} \times
n\big(e^{-q \rho^{+}_{r-c}}; \varepsilon(\rho^{+}_{r-c})=r-c\big)\,dr.
\end{align*}
Differentiating on the both sides of the equation above, we have
\begin{equation}
n\big(e^{-q \rho^{+}_{z}}; \varepsilon(\rho^{+}_{z})=z\big)
= \frac{\sigma^{2}}{2}\Big(\frac{(W^{(q)\prime}(z))^{2}}{W^{(q)}(z)}- W^{(q)\prime\prime}(z)\Big)\quad\text{for $z>0$.}
\label{eqn:lem:exc:3}
\end{equation}
Identity \eqref{eqn:creeping} is thus proved by applying \eqref{eqn:b<xi} for $x=0$.

The hitting of a maximum dependent level
cannot be derived by applying the strong Markov property of $X$
as in the classical case in Lemma \ref{lem_hitting}.
However, due to the absence of positive jumps,
a similar observation is that
$$
\{L^{-1}_{s}<\tau^{\{\xi\}}\wedge \tau_{\theta}\}
=\{L^{-1}_{s}<\tau_{\xi}\wedge \tau_{\theta}\}
\quad\text{on the event $\{L^{-1}_{s}<\infty\}$,}
$$
that is, every excursion at time $s<r=L(\tau^{\{\xi\}})$
fails to go above level $\overline{\xi\vee\theta}(s)$. Therefore,
\begin{align*}
&\ \bE \big(e^{-q \tau^{\{\xi\}}}; \tau^{\{\xi\}}<\tau_{b}^{+}\wedge\tau_{\theta}\big)\\
=&\ \bE \Big(\sum_{r\in[0,b]}
\Big(e^{-q L^{-1}_{r-}} \prod_{s<r}
\mathbf{1}_{\{\overline{\epsilon}_{s}\leq \bar{\xi}(s)\}}
\mathbf{1}_{\{\overline{\epsilon}_{s}\leq \bar{\theta}(s)\}}\Big)
\times \Big(e^{-q \rho^{\{\bar{\xi}(r)\}}(r)}
\mathbf{1}_{\{\rho^{\{\bar{\xi}(r)\}}(r)<\rho^{+}_{\bar{\theta}(r)}(r)\}}\Big)\Big)\\
=&\ \int_{0}^{b} \bE \big(e^{-q L^{-1}_{r}}; L^{-1}_{r}\leq \tau_{\xi\vee\theta}\big)
\times n\Big(e^{-q \rho^{\{\bar{\xi}(r)\}}}; \rho^{\{\bar{\xi}(r)\}}<\rho^{+}_{\bar{\theta}(r)}\Big)
\mathbf{1}_{\{\bar{\xi}(r)<\bar{\theta}(r)\}}\,dr.
\end{align*}
For the case of $\xi\equiv a$ and $\theta\equiv c$ with $c<a<0<b$, we have
\begin{align*}
&\ \frac{W^{(q)}(-c)}{W^{(q)}(a-c)}- \frac{W^{(q)}(-a)W^{(q)}(b-c)}{W^{(q)}(b-a)W^{(q)}(a-c)}\\
=&\bE \Big(e^{-q\tau^{\{a\}}}; \tau^{\{a\}}<\tau_{b}^{+}\wedge\tau_{c}^{-}\Big)\\
=&\ \int_{0}^{b} \bE \big(e^{-q \tau_{r}^{+}}; \tau_{r}^{+}<\tau_{a}^{-}\big)
\times n \Big(e^{- q \rho^{\{r-a\}}}; \rho^{\{r-a\}}<\rho^{+}_{r-c}\Big)\,dr.
\end{align*}
Thus, differentiating in $b$ gives for $b>a>c$
\begin{equation}
n\Big(e^{-q \rho^{\{b-a\}}}; \rho^{\{b-a\}}<\rho^{+}_{b-c}\Big)
=\frac{W^{(q)}(b-c)}{W^{(q)}(a-c)}
\Big(\frac{W^{(q)\prime}(b-a)}{W^{(q)}(b-a)}- \frac{W^{(q)\prime}(b-c)}{W^{(q)}(b-c)}\Big)
\label{eqn:lem:exc:4}
\end{equation}
which proves identity \eqref{eqn:hitting}.
\end{proof}

\begin{rem}
We remark that the
excursion theory for a (reflected) SNLP has been employed to solve various problems.
The formulas
\eqref{eqn:lem:exc:2}, \eqref{eqn:lem:exc:5}, \eqref{eqn:lem:exc:3},
\eqref{eqn:lem:exc:1} for $q=0$ and \eqref{eqn:lem:exc:4} for $b-c=\infty$
have also been found as well; see \cite{Pis07} for a collection of such results.
\end{rem}

\begin{proof}[Proof of Corollary \ref{cor:1}]
Since
$\{t<\tau_{b}^{+}\}=\{\bar{X}_{t}< b\}$
and $\{t<\tau_{c}^{-}\}=\{\underline{X}_{t}> c\}$
for every $t>0$,
we have from Proposition \ref{prop:resolvent} that, for $z>x\geq c$ and $\xi(z)\vee c<y\leq z$,
\begin{align*}
&\ \int_{0}^{\infty} e^{-q t} \bP_{x}(X_{t}\in dy, \bar{X}_{t}\in dz , t<\tau_{\xi}, \underline{X}_{t}\geq c)\,dt\\
=&\ \int_{0}^{\infty} e^{-q t} \bP_{x}(X_{t}\in dy, \bar{X}_{t}\in dz , t<\tau_{\xi\vee c})\,dt\\
=&\
\Big(e^{-\int_{x}^{z} \frac{W^{(q)\prime}(\overline{\xi\vee c}(s))}{W^{(q)}(\overline{\xi\vee c}(s))} ds}
W^{(q)\prime}(z-y)- \frac{W^{(q)\prime}(\overline{\xi\vee c}(z))}{W^{(q)}(\overline{\xi\vee c}(z))} W^{(q)}(z-y)\Big)
dzdy\non\\
&\quad + W(0) \Big(
e^{-\int_{x}^{z} \frac{W^{(q)\prime}(\overline{\xi\vee c}(s))}{W^{(q)}(\overline{\xi\vee c}(s))} ds}\Big) dz\cdot \delta_{z}(dy),
\end{align*}
where $\delta_{z}$ denotes a Direct measure at $z$.
Notice that $\tau_{\xi}\in \{t>0, X_{t-}\neq X_{t}\}$ on the event
$\{X_{\tau_{\xi}}\neq X_{\tau_{\xi}-}\}\cap\{\tau_{\xi}<\infty\}$, and $\{(X_{t}-X_{t-}), t\geq0\}$ can be identified as
a Poisson point process with characteristic measure $\Pi(\cdot)$. Therefore, for $f\geq0$ satisfying $f(z,z)=0$, we have from the Fubini's theorem that
\begin{align*}
&\ \bE_{x}\big(e^{-q \tau_{\xi}} f(X_{\tau_{\xi}-}, X_{\tau_{\xi}}); \underline{X}_{\tau_{\xi}-}\geq c, \bar{X}_{\tau_{\xi}}\leq b\big)\\
=&\ \bE_{x}\big(e^{-q \tau_{\xi}} f(X_{\tau_{\xi}-}, X_{\tau_{\xi}}); X_{\tau_{\xi}}\neq X_{\tau_{\xi}-},
\underline{X}_{\tau_{\xi}-}\geq c, \bar{X}_{\tau_{\xi}}\leq b\big)\\
=&\ \bE_{x}\Big(\sum_{\{t: X_{t-}\neq X_{t}\}} e^{-q t} f(X_{t-}, X_{t-}+\Delta X_{t})
\mathbf{1}(\underline{X}_{t-}\geq c)\mathbf{1}(\bar{X}_{t}\leq b)\\
&\quad\quad\quad \times\mathbf{1}(t\leq\tau_{\xi})
\mathbf{1}(X_{t-}+\Delta X_{t}<\xi(\bar{X}_{t}))\Big)\\
=&\ \int_{0}^{\infty}e^{-q t}\,dt \int_{x}^{b} \int_{c}^{z}  \bP_{x}(X_{t-}\in dy, \bar{X}_{t-}\in dz , t<\tau_{\xi}, \underline{X}_{t-}\geq c)\\
\qquad\quad&\qquad\times \int_{-\infty}^{\xi(z)-y}f(y,y+u)\Pi(du),
\end{align*}
where the compensation formula is applied for the last equation.
The desired result then follows from the quasi-left continuity for process $X$.
\end{proof}





\acks
The authors are thankful to anonymous referees for careful reading and helpful comments and suggestions.
Bo Li is supported by National Natural Science Foundation of China (No. 11601243).
Bo Li, Nhat Linh Vu and Xiaowen Zhou are supported by NSERC (RGPIN-2016-06704).
Xiaowen Zhou is supported by Natural Science Foundation of Hunan Province (No. 2017JJ2274)


%
%
%
%


\end{document}